\documentclass[12pt]{amsart}
\usepackage{amsaddr}
\usepackage{amsmath, amsthm, amscd, amsfonts, amssymb, graphicx, xcolor,comment, mathtools}
\usepackage[numbers,square]{natbib}
\usepackage[colorlinks,citecolor=black,urlcolor=black]{hyperref}
\usepackage{setspace}
\newcommand{\dd}{\,\mathrm d}

\newtheorem{theorem}{Theorem}[section]
\newtheorem{lemma}[theorem]{Lemma}
\newtheorem{proposition}{Proposition}[section]

\theoremstyle{definition}
\newtheorem{definition}{Definition}[section]
\newtheorem{example}{Example}[section]

\theoremstyle{remark}
\newtheorem{remark}{Remark}[section]
\numberwithin{equation}{section}

\title[Strong Negative Type]{Strong Negative Type of Star-shaped Manifolds}

\author[Hang Liu, Ziyan Xu]{Hang Liu$^1$, Ziyan Xu$^2$}
\address{$^1$ Department of Statistics and Finance, School of Management,
University of Science and Technology of China, Hefei, China.\\
$^2$ School of Mathematical Sciences,
University of Science and Technology of China, Hefei, China.}
\email{hliu01@ustc.edu.cn, ustc-xzy@mail.ustc.edu.cn}

\begin{document}

\begin{abstract}

Metric spaces of negative type (NT) and strong negative type (SNT) are fundamental in wide applications across computer science and statistics. It is well-known that spheres equipped with geodesic distance are of NT, and subsets of a sphere that contain at most one pair of antipodal points (e.g.\ an open hemisphere) are of SNT. In this paper, we extend these results to a class of star-shaped manifolds. By constructing a metric $d$ based on
layer-wise radial projections,  
we prove that every star-shaped manifold, equipped with $d$, is of NT. Moreover, we prove that a subset of a star-shaped manifold is of SNT under certain injectivity and directional asymmetry conditions. In particular, for a subset of a single layer, it is of SNT if it contains exactly one point from each preimage of an open hemisphere (a fully directional asymmetry); moreover, it is of SNT if and only if it contains at most one opposite-directional pair.



\noindent \textit{Keywords.} Strong negative type, Star-shaped manifold, Radial projection, Distance covariance, Metric space.

\noindent \textit{MSC2020 subject classifications:} 53C23, 53C25.
\end{abstract}
\maketitle

\section{Introduction}

A metric space $(\mathcal{X}, d)$ is said to be of {\it negative type} (NT) if, for any $\ell\ge 1$, $x_i\in \mathcal{X}$, and $\alpha_i\in \mathbb{R}$, $i = 1,2,\dots,\ell$ such that~$\sum_{i=1}^\ell \alpha_i = 0$,  
 \[\sum_{1\le i,j\le \ell}\alpha_i\alpha_j d(x_i,x_j) \le 0.\]
 An alternative definition is that $(\mathcal{X}, d^{1/2})$ can be isometrically embedded into a Hilbert space \cite{schoenberg1937, schoenberg1938}.
 Let $\mathcal{P}_1(\mathcal{X})$ denote the set of probability measures on $\mathcal{X}$ with finite first-order moment. We say that~$(\mathcal{X}, d)$ is of {\it strong negative type} (SNT) if (a)  $(\mathcal{X}, d)$ is of NT and (b) for any~$\mu_1, \mu_2 \in \mathcal{P}_1(\mathcal{X})$, the \emph{$d$-energy $E_d$} of $\mu_1-\mu_2$ satisfies
 \begin{equation}\label{eq.SNT}
E_d(\mu_1-\mu_2) \coloneqq \int_{\mathcal{X}\times \mathcal{X}} d(x,y) {\rm d}(\mu_1-\mu_2)^2(x,y)\leq 0,
 \end{equation}
with equality if and only if $\mu_1 = \mu_2$.

Metric spaces of NT have wide applications in computer science and statistics, including graph embedding \cite{naor2010l1, campbell2025}, kernel-based machine learning \cite{feragen2015}, and outlier detection \cite{huntsman2025}. However, for some applications (e.g.\ independence test and homogeneity test), NT alone is insufficient and SNT is required \cite{Szekely2007, lyons2013, szekely2013,Discussion2025}.

It is well-known that spheres equipped with geodesic distance are of NT \cite{meckes2013}, and subsets of a sphere that contain at most one pair of antipodal points (e.g.\ an open hemisphere) are of SNT \cite{lyons2020}. In this paper, we extend these results to a class of star-shaped manifolds. We start from constructing a metric based on
layer-wise radial projections. Then we prove that every star-shaped manifold, equipped with this metric, is of NT.
Moreover, we prove that a subset of a star-shaped manifold is of SNT under certain injectivity and directional asymmetry conditions. In particular, for a subset of a single layer, it is of SNT if it contains exactly one point from each preimage of an open hemisphere (a fully directional asymmetry); moreover, it is of SNT if and only if it contains at most one opposite-directional pair.

We further consider manifolds with incomplete radial coverage,
which naturally arise for non-convex geometries with substantial holes.
This provides an extension of the negative-type construction beyond the complete radial setting. The theoretical results are illustrated on explicit geometries, including the ellipsoid, rotational toroidal hypersurface and product torus.

The rest of this paper is organised as follows. Section~\ref{sec.main} introduces the terminology that is related to star-shaped manifolds, and states the main results for the NT and SNT of star-shaped manifolds and their submanifolds. Three examples---ellipsoid, rotational toridal hypersurface, and product torus---conclude this section. Finally, all proofs are collected in Section~\ref{sec.proof}.





\section{Main results}\label{sec.main}

In this section, we state the main results of this paper. To this end, we first introduce the necessary notation and terminology.

\subsection{Notation and terminology}

Let $\mathcal{X} \subset \mathbb{R}^{n+1}$ be a bounded $C^1$-smooth, $n$-dimensional manifold.
Assume $p\notin\mathcal X$ and $\sigma_{\mathcal X}(\mathcal X)<\infty$, where $\sigma_{\mathcal X}=\mathcal H^n|_{\mathcal X}$ is induced volume and $\mathcal H^n$ denotes Hausdorff measure normalized to agree with
Euclidean $n$-volume. In particular, $\mathcal H^2$ is surface area. All subsets and measures below are Borel. Whenever $d$ is a metric, we assume that $\mathcal{X}$ is separable and that its Borel sets agree with the inherited Borel sets; these properties hold for the concrete metrics used in the examples.

\begin{definition}
We say $\mathcal{X}$ is \textit{$k_p$-star-shaped} ($k \in \mathbb{N}$) with respect to $p$ 
if $p\in\mathbb R^{n+1}\setminus\mathcal X$ is fixed and for each ray $r_p$ emanating from $p$, the intersection of $r_p$ with $\mathcal{X}$ is nonempty and consists of at most $k$ points. 
\end{definition}

\begin{definition}
We say $\mathcal{X}$ belongs to the class of \emph{p-finite-star-shaped} manifolds, denoted as $\mathfrak{M}_p$, if $ \mathcal{X}$ is \textit{$k_p$-star-shaped} for some finite integer $k \geq 1$.   
\end{definition}








For $x \in \mathcal{X}$, let $r_p^x$ denote the ray starting from $p$ and passing through $x$. Define $i_p(x)$ as the \emph{layer index} of $x$, that is, the index of the intersection at $x$ between $r_p^x$ and $\mathcal{X}$, counting from $p$ outward. When $\mathcal{X}$ is $k_p$-star-shaped with respect to $p$, we have $i_p(x) \in \{1,\dots,k\}$. For each $i = 1,\dots,k$, the $i$-th layer is denoted by $L_p(i) \coloneqq \{x \in \mathcal{X} : i_p(x) = i\}$. Finally, define $\phi_p(x,y) \coloneqq 1_{\{i_p(x) = i_p(y)\}}$ as the indicator of whether $x$ and $y$ lie in the same layer.











Let $\mathcal{S}^n\subset \mathbb{R}^{n+1}$ be the unit sphere centred at the origin. For a fixed $t\in \mathcal{S}^n$, define the open hemisphere
\[H_t \coloneqq \{z\in \mathcal{S}^n: t\cdot z > 0\}.\]
Let $P_p\colon \mathcal{X}\to \mathcal{S}^n$ denote the radial projection with respect to the reference point $p$, namely,
\[
P_p(x)=\frac{x-p}{\|x-p\|}.
\]
Since $\mathcal{X}$ is $k_p$-star-shaped with respect to $p$, for each $i=1,\dots,k$, the restriction of $P_p$ to the Borel layer $L_p(i)$ is an injective Borel map with Borel inverse onto its image. It is a diffeomorphism on every regular radial branch. We denote 
\[
P_{p,i}\colon L_p(i) \rightarrow  P_p(L_p(i))\subseteq\mathcal{S}^n
\]
and the inverse mapping by
\[
P_{p,i}^{-1}\colon P_p(L_p(i))\subseteq\mathcal{S}^n\rightarrow L_p(i).
\]

Unless a relaxed radial-coverage condition is explicitly stated, we assume that $\mathcal{X} \in \mathfrak{M}_p$, and we write $k$, $P$ and $P_i$ for $k_p$, $P_p$ and $P_{p, i}$, respectively, when no ambiguity arises. For points $t \in \mathcal{S}^n$ that are not in the image of $P(L_p(i))$, we set $P_i^{-1}(t) = \emptyset$ by convention.

For each layer $L_p(i)$, let $\sigma_i$ denote the induced
$n$-dimensional Riemannian volume measure on $L_p(i)$.
Set $E_i=P(L_p(i))$, $m_i=(P_i)_\#\sigma_i$ and
$m=\sum_i m_i=P_\#\sigma_{\mathcal X}$ and $E=P(\mathcal X)$. Then
\[
\int_{L_p(i)} f(x)\,d\sigma_i(x)
=\int_{E_i}f(P_i^{-1}(t))\,dm_i(t).
\]
On a regular radial branch, $dm_i(t)=J_i(t)\,d\sigma_{\mathcal S^n}(t)$, where $J_i$ is the volume Jacobian of $P_i^{-1}$. The pushforward definition also applies at nonregular points and does not evaluate $f$ at an empty preimage.

The definition of $k_p$-star-shaped manifolds requires both complete pointwise inclusion of the manifold and complete radial coverage from the reference point $p$. We now relax these two requirements in two distinct ways. An \emph{almost $k_p$-star-shaped} manifold may differ from the underlying $k_p$-star-shaped manifold only on a null subset of the manifold. An \emph{essentially $k_p$-star-shaped} manifold remains $C^1$, but its radial projection may omit a null subset of directions on $\mathcal{S}^n$. The two cases represent, respectively, a measure-zero modification of an existing radial structure and an intrinsic failure of complete radial coverage.

\begin{definition}[almost $k_p$-star-shaped]
Let $\mathcal{X}_0$ be a $k_p$-star-shaped manifold with respect to
$p$. A subset $\mathcal{X}\subseteq\mathcal{X}_0$ is called an \emph{almost $k_p$-star-shaped} manifold with respect to $p$ if
\(\mathcal{X}_0\setminus\mathcal{X}\)
is a set of measure zero with respect to the induced $n$-dimensional Riemannian measure on $\mathcal{X}_0$.
\end{definition}

\begin{definition}[essentially $k_p$-star-shaped]\label{def.essential}
Let $\mathcal{X}\subset\mathbb{R}^{n+1}$ be a bounded $C^1$,
$n$-dimensional manifold satisfying the standing finite-volume assumptions. We say that $\mathcal{X}$ is an \emph{essentially $k_p$-star-shaped} manifold with respect to $p$ if the radial projection $P$ has full rank
$\sigma_{\mathcal X}$-almost everywhere on $\mathcal X$, and
there exists a null set $N\subseteq\mathcal{S}^n$
such that $P^{-1}(N)=\emptyset$ and $1\leq |P^{-1}(t)|\leq k$ for some fixed $k \in\mathbb N_+$ and all $t\in\mathcal{S}^n\setminus N$.
\end{definition}

\subsection{Main results}\label{subsec.main}

Let $\mathcal{X}$ denote a $k_p$-star-shaped $n$-dimensional manifold. For $x, y \in \mathcal{X}$, define
\begin{equation}\label{eq.metric}
\begin{split}
d(x,y)&\coloneqq J(x,y)+ D(x, y),
\end{split}
\end{equation}
where 
\begin{align}
D(x, y) &= \sum_{i=1}^k\int_{L_p(i)}
|1_{H_{P(z)}}(P(x))-1_{H_{P(z)}}(P(y))|^2\,{\rm d}\sigma_i(z) \nonumber\\
&=\int_{\mathcal S^n}|1_{H_t}(P(x))-1_{H_t}(P(y))|^2\,{\rm d}m(t) \label{eq.D}
\end{align}
is the angular part of $d$, and $J$ is the \emph{penalty function} satisfying that
\begin{enumerate}
    \item[(i)] $J$ is symmetric, $J(x, x) = 0$, $J(x,y)\geq 0$ and $J\,(x,z)+J\,(y,z)\geq J\,(x,y)$ for all $x,y,z\in \mathcal{X}$;
    \item[(ii)] $J(x, y) <\infty$ if $\Vert x-y \Vert <\infty$;
    \item[(iii)] $J$ satisfies the NT integral inequality, that is, for any finite signed measure $\nu$ with $\nu(\mathcal{X})=0$ and $\iint J\,d|\nu|\,d|\nu|<\infty$, 
    \begin{equation}\label{eq.J_negType}
    \int_{\mathcal{X}\times \mathcal{X}} J(x,y)\,{\rm d}\nu(x)\,{\rm d}\nu(y)\le 0.
    \end{equation}
    \item[(iv)] when the radial coverage is complete, $J$ separates points within the same fibre, that is,
\begin{equation}\label{eq.fibre-separation}
x\ne y,\quad P(x)=P(y)\quad\Longrightarrow\quad J(x,y)>0
\end{equation}
($J\equiv 0$ is allowed when $P$ is injective).
\end{enumerate}


Note that without the separation condition \eqref{eq.fibre-separation}, \eqref{eq.metric} defines only a pseudometric. For the almost $k_p$-star-shaped case, we take $J$ to be the restriction of the same penalty function defined on $\mathcal{X}_0$.

Simple choices of $J$ include $J\equiv 0$ when $P$ is injective, or $J(x,y)=c(1-\phi_p(x,y))$ with $c>0$, or $J(x,y)=|f(x)-f(y)|$ for any bounded measurable function $f$.





The first result of this paper is as follows.

\begin{theorem}\label{Thm.Xd}
    $(\mathcal{X},d)$ is a metric space of NT.
\end{theorem}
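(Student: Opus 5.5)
The proof splits into two parts: first that $d$ is a genuine (finite) metric on $\mathcal X$, and second that it satisfies the negative-type inequality. The plan is to treat $d=J+D$ term by term, using that both properties are preserved under sums of nonnegative kernels.

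\emph{Metric axioms.} We start with $D$. Write $g_x(t)\coloneqq 1_{H_t}(P(x))$, a function of $t\in\mathcal S^n$. Because $g_x$ takes only the values $0,1$, the squared difference $|g_x-g_y|^2=|g_x-g_y|$, and therefore
\[
D(x,y)=\|g_x-g_y\|_{L^1(m)}=\|g_x-g_y\|_{L^2(m)}^2 .
\]
The $L^1$ form immediately gives symmetry, $D(x,x)=0$, nonnegativity and the triangle inequality. It also gives finiteness, since $D\le m(\mathcal S^n)=\sigma_{\mathcal X}(\mathcal X)<\infty$. By (i) and (ii), $J$ is a finite pseudometric, since $\mathcal X$ is bounded. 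Hence $d=J+D$ is a finite pseudometric, and it remains to check separation. Suppose $x\neq y$.
\begin{itemize}
\item If $P(x)=P(y)$, then $J(x,y)>0$ by \eqref{eq.fibre-separation}.
\item If $P(x)\neq P(y)$, then $g_x$ and $g_y$ differ on the set of $t$ with $t\cdot P(x)>0\ge t\cdot P(y)$ or vice versa. This set is a nonempty open lune (a wedge) of $\mathcal S^n$. We need it to have positive $m$-measure.
\end{itemize}
The second case is the delicate point. Since $m=P_\#\sigma_{\mathcal X}$ and every ray meets $\mathcal X$, the image $E$ is all of $\mathcal S^n$. I would argue that $m$ charges every nonempty open set: on regular branches $dm=J_i\,d\sigma_{\mathcal S^n}$ with $J_i>0$, and the preimage of an open set of directions is a nonempty open subset of the $C^1$ manifold $\mathcal X$, hence has positive $\mathcal H^n$ measure. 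This gives $D(x,y)>0$. I expect this step, the support of $m$ together with the handling of nonregular points, to be the main obstacle. If it turns out to be too fiddly, the fallback is to note that $J$ must separate in that case instead.

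\emph{Negative type.} Take $\alpha_i$ with $\sum_i\alpha_i=0$. For the $D$ part,
\[
\sum_{i,j}\alpha_i\alpha_j D(x_i,x_j)=\sum_{i,j}\alpha_i\alpha_j\|g_{x_i}-g_{x_j}\|_{L^2(m)}^2=-2\Big\|\sum_i\alpha_i g_{x_i}\Big\|_{L^2(m)}^2\le 0 .
\]
Here the squared norms $\|g_{x_i}\|^2$ drop out because $\sum_i\alpha_i=0$. Equivalently, $x\mapsto g_x$ is an isometric embedding of $(\mathcal X,D^{1/2})$ into $L^2(m)$.

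For the $J$ part, apply (iii) to the finite signed measure $\nu=\sum_i\alpha_i\delta_{x_i}$. It has total mass $\nu(\mathcal X)=0$, and $\iint J\,d|\nu|\,d|\nu|$ is a finite sum of finite values. This gives $\sum_{i,j}\alpha_i\alpha_j J(x_i,x_j)\le0$. Adding the two inequalities proves that $(\mathcal X,d)$ is of NT.
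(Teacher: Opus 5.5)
Your proposal is correct and follows the paper's proof essentially step for step. The shared steps are: the binary identity $|a-b|^2=|a-b|$ for finiteness and the triangle inequality; fibre separation via $J$ when $P(x)=P(y)$; positivity of $m$ on the separating lune when $P(x)\neq P(y)$; and the cancellation giving $-2\bigl\|\sum_i\alpha_i g_{x_i}\bigr\|_{L^2(m)}^2$ (the paper's \eqref{eq.ht}--\eqref{eq.I}), combined with (iii) for the $J$-part.

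The one variation is how you show $m$ charges the lune. You argue topologically: continuity and surjectivity of $P$ make its preimage a nonempty relatively open piece of $\mathcal X$, which has positive $\mathcal H^n$-measure. This is valid and simpler than the paper's area-formula Lemma~\ref{lem.volume-positive}, although the paper's route yields the stronger $\sigma_{\mathcal S^n}\ll m$ that is needed later for the SNT results.

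Your stated fallback, that $J$ must separate points with $P(x)\neq P(y)$, is not supported by the hypotheses ($J\equiv 0$ is allowed). Your primary argument already settles that case, so the fallback is never needed.
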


For a Borel subset $G$ of $\mathcal{X}$, Theorem~\ref{Thm.Gd1} states that $(G, d)$ is of SNT when $G$ satisfies the following \emph{half-space direction} (HSD) condition.

(HSD). There exists a hemisphere $H \subset \mathcal{S}^n$ and its preimage $P^{-1}(H) \subseteq \mathcal{X}$ such that
\[G=\{ x\in \mathcal{X}\;|\; \text{for each} \,\, t\in H, \text{choose one} \, \, x\in P^{-1}(t)\}.\]


\begin{theorem}\label{Thm.Gd1}
Assume that $G \subset \mathcal{X}$ is Borel and satifies (HSD). Then $(G,d)$ is of SNT.
\end{theorem}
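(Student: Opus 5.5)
NT of $(G,d)$ is immediate from Theorem~\ref{Thm.Xd}, since negative type passes to subsets. The real task is part (b) of the definition. I will take $\mu_1,\mu_2\in\mathcal P_1(G)$, put $\nu=\mu_1-\mu_2$, and show that $E_d(\nu)=0$ forces $\nu=0$.

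The first step splits the energy as $E_d(\nu)=E_J(\nu)+E_D(\nu)$. By (iii), $E_J(\nu)\le 0$, and this is justified because $\iint J\,d|\nu|\,d|\nu|\le \iint d\,d|\nu|\,d|\nu|<\infty$ from the finite first moments. For the angular part I will use the identity $|a-b|^2=\tfrac12(a^2+b^2)+\tfrac12(a^2+b^2)-2ab$ for indicators together with $\nu(G)=0$. Fubini then gives
\[
E_D(\nu)=\int_{\mathcal S^n}\!\iint |1_{H_t}(P(x))-1_{H_t}(P(y))|^2\,d\nu(x)\,d\nu(y)\,dm(t)=-2\int_{\mathcal S^n}\Bigl(\int 1_{H_t}(P(x))\,d\nu(x)\Bigr)^2dm(t).
\]
Fubini applies because $m$ is finite (as $\sigma_{\mathcal X}(\mathcal X)<\infty$) and the integrand is bounded. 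Hence $E_d(\nu)\le 0$.

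If $E_d(\nu)=0$, both terms vanish. In particular $\hat\nu(t):=(P_\#\nu)(H_t)=0$ for $m$-a.e.\ $t$. Because $P$ is onto $\mathcal S^n$ under complete radial coverage, the measure $m=P_\#\sigma_{\mathcal X}$ should be mutually absolutely continuous with $\sigma_{\mathcal S^n}$ on regular directions. I would argue this via the Jacobians $J_i>0$ on regular branches and Sard's theorem for the critical directions. It then follows that $(P_\#\nu)(H_t)=0$ for $\sigma_{\mathcal S^n}$-a.e.\ $t$.

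Now use (HSD). The map $P|_G$ is injective with Borel inverse onto the open hemisphere $H$, so $\nu$ is determined by $\eta:=P_\#\nu$, a signed measure supported on $H$ with $\eta(\mathcal S^n)=0$. The claim is the known result of \cite{lyons2020} for the open hemisphere. The hemisphere transform $t\mapsto\eta(H_t)$ vanishing a.e.\ determines the odd part of $\eta$, that is, $\eta-\eta\circ(-\mathrm{id})$. Since $\eta$ lives on $H$ and $-H$ is disjoint from $H$, this odd part determines $\eta$ itself, so $\eta=0$. The right tool is the injectivity of the hemispherical (Funk--Minkowski type) transform on odd measures. Equivalently, the geodesic-distance energy on the sphere is $c\,E_D$ with $m=\sigma_{\mathcal S^n}$, and $(H,\text{geodesic})$ is of SNT by \cite{lyons2020}. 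I would reduce to that: weighting by a density $w=dm/d\sigma>0$ a.e.\ does not change the null set of $t$ where $\eta(H_t)=0$, so the geodesic energy of $\eta$ is also $0$ and \cite{lyons2020} gives $\eta=0$. Injectivity of $P|_G$ then yields $\nu=0$.

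The main obstacle is the step $m\sim\sigma_{\mathcal S^n}$. I need $m(A)=0$ to imply $\sigma_{\mathcal S^n}(A)=0$, since a.e.\ vanishing with respect to $m$ must transfer to a.e.\ vanishing on the sphere. If some directions are covered only by nonregular points, this could fail. I would handle it by showing that every direction lies in $P$ of a regular point up to a null set: the image of the critical set is null by Sard ($C^1$ suffices, since the dimensions are equal), and a surjective image forces regular coverage elsewhere. A secondary issue is continuity of $t\mapsto\eta(H_t)$ off the great circles charged by $|\eta|$, which is needed to upgrade a.e.\ vanishing to vanishing everywhere if I use the direct transform argument rather than citing \cite{lyons2020}.
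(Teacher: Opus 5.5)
Your proof is correct and has the same skeleton as the paper's:
\begin{itemize}
\item split $E_d=E_J+E_D$;
\item use \eqref{eq.I} to get $(P_\#\nu)(H_t)=0$ for $m$-a.e.\ $t$;
\item transfer this to $\sigma_{\mathcal S^n}$-a.e.\ $t$ via $\sigma_{\mathcal S^n}\ll m$;
\item show $P_\#\nu=0$ on the open hemisphere $H$, and pull back through the Borel-injective map $P|_G$.
\end{itemize}

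For $\sigma_{\mathcal S^n}\ll m$, the paper uses the area formula on the first layer (Lemma~\ref{lem.volume-positive} with $P(L_p(1))=\mathcal S^n$), which handles critical points automatically. Your Sard-plus-local-diffeomorphism argument gives the same fact, and you are right that only this direction is needed. In fact $m$ can have a singular part on the critical values when the critical set has positive $\sigma_{\mathcal X}$-measure, so the ``density $w=dm/d\sigma_{\mathcal S^n}$'' in your last route should simply be dropped.

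The real difference is the final injectivity step. The paper passes to a dense set of normals and applies Lemma~\ref{lem.injective}(ii). This is injectivity of $\lambda\mapsto\lambda(H\cap H_t)$ for measures on $H$ (Theorem~1.3 of \cite{lyons2020}), upgraded to dense sets of normals by a continuity-normal argument.

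Your first route argues that the hemisphere data determine $\bar\eta=\eta-R_\#\eta$, and that $\bar\eta|_H=\eta$ because $H\cap R(H)=\emptyset$. This is the antisymmetrisation argument the paper uses for Theorem~\ref{Thm.Gd2}, reflecting that (HSD) is the case of (INJ) and (ODA) with no antipodal pair. It needs the a.e.-to-dense upgrade you flag, which Lemma~\ref{lem:antisymm}(ii) supplies.

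Your second route reads $\int\eta(H_t)^2\,d\sigma_{\mathcal S^n}(t)=0$ as zero geodesic energy of $P_\#\mu_1-P_\#\mu_2$, then quotes strong negative type of the open hemisphere from \cite{lyons2020}. This is the cleanest option: the energy is already an integral over normals, so no pointwise upgrade is needed at all.
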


In addition to (HSD), which represents a fully directional asymmetry, SNT also holds under the following conditions of injectivity (INJ) and relaxed opposite-directional asymmetry (ODA).

(INJ). $|G\cap P^{-1}(t)|\leq1$ for all $t\in \mathcal{S}^n$.

(ODA). There is at most one pair of antipodal points $\{x,x'\}$ of $\mathcal{S}^n$ such that $|G\cap P^{-1}(x)|\cdot|G\cap P^{-1}(x')| \neq 0$.


\begin{theorem}\label{Thm.Gd2}
For a Borel subset $G \subseteq \mathcal{X}$, we have
\begin{enumerate}
    \item[(i)] the metric subspace $(G,d)$ is of SNT if both (INJ) and (ODA) hold for $G$;
    \item[(ii)] when the penalty term $J \equiv 0$, the sufficient conditions (INJ) and (ODA) in (i) are also necessary.
\end{enumerate}

\end{theorem}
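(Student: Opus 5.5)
We reduce everything to the sphere through the radial projection $P$. For a finite signed measure $\nu$ on $G$ with $\nu(G)=0$ and finite first moment, the definition \eqref{eq.D} and Fubini give
\[
\iint D(x,y)\,d\nu(x)\,d\nu(y) \;=\; -2\int_{\mathcal S^n}\Bigl(\int 1_{H_t}(P(x))\,d\nu(x)\Bigr)^2 dm(t)
\;=\; -2\int_{\mathcal S^n} \bigl(P_\#\nu(H_t)\bigr)^2\,dm(t).
\]
This uses $|a-b|^2=a^2+b^2-2ab$ for indicators, and the $a^2,b^2$ terms vanish because $\nu(G)=0$. Combined with (iii) for $J$, the energy $E_d(\nu)$ is $\le 0$, so $(G,d)$ is of NT by Theorem~\ref{Thm.Xd}. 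It remains to show that $E_d(\nu)=0$ forces $\nu=0$.

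\textbf{Part (i).} Suppose $E_d(\nu)=0$. Then both the $J$-part and the $D$-part vanish. The $D$-part vanishing gives $\tilde\nu(H_t)=0$ for $m$-a.e.\ $t$, where $\tilde\nu:=P_\#\nu$.

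\begin{itemize}
\item \emph{Step 1: the hemisphere condition holds for all $t$.} Every direction carries positive mass from some layer: since $m=P_\#\sigma_{\mathcal X}$ and the radial coverage is complete, $m$ has positive density a.e.\ on $\mathcal S^n$ via the Jacobians $J_i$. Hence $\tilde\nu(H_t)=0$ for $\sigma_{\mathcal S^n}$-a.e.\ $t$. Passing from a.e.\ $t$ to all $t$ uses continuity of $t\mapsto\tilde\nu(H_t)$ away from great-sphere atoms, or simply working a.e.
\item \emph{Step 2: reduce to Lyons' theorem.} By (INJ), $P|_G$ is injective and Borel with Borel inverse, so $\nu=0$ if and only if $\tilde\nu=0$. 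By (ODA), the support $P(G)$ contains at most one antipodal pair. Lyons \cite{lyons2020} showed that the geodesic sphere restricted to such sets is of SNT. The geodesic distance satisfies $\rho(s,u)=\tfrac{\pi}{2}\cdot\frac{\int|1_{H_t}(s)-1_{H_t}(u)|^2d\sigma(t)}{\sigma(\mathcal S^n)/2}$, up to a constant. So $\tilde\nu(H_t)=0$ for a.e.\ $t$ is exactly zero geodesic energy of $\tilde\nu$ on $P(G)$. Lyons' result then yields $\tilde\nu=0$, hence $\nu=0$.
\end{itemize}

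\textbf{Main obstacle in (i).} The difficulty is Step 1: our weight $m$ differs from the uniform measure. We need $m$ to have a strictly positive density a.e. so that the vanishing sets agree. This holds when $P$ has full rank a.e.; in general we only need $m\gg\sigma_{\mathcal S^n}$ on the relevant set. If the density could vanish, we would instead rerun Lyons' argument directly. That argument shows, via spherical harmonics and the Funk–Hecke transform of hemisphere indicators, that vanishing hemisphere measures determine the odd part of $\tilde\nu$. The even part is then controlled by the antipodal-asymmetry support condition.

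\textbf{Part (ii).} Take $J\equiv0$. Then $d=D$ depends only on $P(x)$.
\begin{itemize}
\item \emph{If (INJ) fails:} pick $x\ne y$ in $G$ with $P(x)=P(y)$. Then $\nu=\delta_x-\delta_y$ has $E_d(\nu)=0$ but $\nu\ne0$. In fact $d(x,y)=0$, so $d$ is not even a metric on $G$, and SNT fails.
\item \emph{If (INJ) holds but (ODA) fails:} choose two antipodal pairs $\{s,-s\}$ and $\{u,-u\}$ in $P(G)$, with preimages $x_s,x_{-s},x_u,x_{-u}$. Set $\nu=\delta_{x_s}+\delta_{x_{-s}}-\delta_{x_u}-\delta_{x_{-u}}$. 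For every $t$ off the null set of great spheres through these points, $\tilde\nu(H_t)=1-1=0$, since each hemisphere contains exactly one point of each antipodal pair. Thus $E_d(\nu)=0$ while $\nu\ne0$, contradicting SNT.
\end{itemize}
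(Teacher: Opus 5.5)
Your proof is correct, but for part (i) you take a different route from the paper.

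\textbf{The paper's route.} After showing that the hemisphere masses of $\mu'_\ell=P_\#\mu_\ell$ agree on a dense set of normals, the paper invokes Lemma~\ref{lem:antisymm} to obtain $\overline{\mu'_1}=\overline{\mu'_2}$. It then writes each $\mu'_\ell$ as its maximal reflection-invariant part $\mu'_\ell\wedge R_\#\mu'_\ell$ plus $\overline{\mu'_\ell}^{\,+}$. By (ODA) the invariant parts are multiples of $\delta_z+\delta_{-z}$, and equality of total masses identifies them.

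\textbf{Your route.} You use the Crofton identity for the geodesic distance $\rho$,
$\rho(s,u)=\pi\,\sigma_{\mathcal S^n}(\mathcal S^n)^{-1}\int|1_{H_t}(s)-1_{H_t}(u)|^2\,d\sigma_{\mathcal S^n}(t)$.
This lets you read $\sigma_{\mathcal S^n}$-a.e.\ vanishing of $P_\#\nu(H_t)$ as zero geodesic energy of $P_\#\nu$ on $P(G)$. You then apply Lyons' theorem on subsets of the sphere with at most one antipodal pair as a black box.

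\textbf{What each buys.} Your reduction is shorter and works directly with almost-everywhere data, so no passage to a dense set of normals or continuity argument is needed. It also makes plain that the weight $m$ matters only through $\sigma_{\mathcal S^n}\ll m$. The paper essentially re-derives the content of Lyons' theorem. That shows exactly where (ODA) enters: hemisphere data fix the odd part, and the antipodal restriction fixes the even part. The same argument is then reused in the proof of Theorem~\ref{thm.typeIandII}. Your part (ii) coincides with Remark~\ref{rmk.ThmGd2}.

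\textbf{On your ``main obstacle''.} It is not actually an obstacle. Since the coverage is complete, $P_1\colon L_p(1)\to\mathcal S^n$ is a bijection. The area formula $\sigma_{\mathcal S^n}(A)=\int_{P_1^{-1}(A)}\operatorname{Jac}_nP\,d\sigma_1$ then gives $\sigma_{\mathcal S^n}\ll m_1\le m$ with no full-rank assumption; this is Lemma~\ref{lem.volume-positive}. Your proposed fallback would not have helped in any case, because where $m$ vanishes no hemisphere data are available.

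\textbf{A small point in part (ii).} The exceptional great spheres must be $m$-null, not merely $\sigma_{\mathcal S^n}$-null. This does hold: $m(\partial H_s)=\mathcal H^n(\mathcal X\cap\Pi_s)$, where $\Pi_s$ is the hyperplane through $p$ orthogonal to $s$. By Fubini in polar coordinates on $\Pi_s$, a subset of positive measure would meet some ray from $p$ in infinitely many points, contradicting finite fibres. The paper glosses this point as well.
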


\begin{remark}\label{rmk.ThmGd2}
The conditions (INJ) and (ODA) are sufficient but not necessary in general. The necessity
depends on the penalty $J$, since the angular part $D$ in \eqref{eq.metric} cannot distinguish
two probability measures that differ only by a redistribution of mass within a single radial fibre. Indeed, assume
$P^{-1}(x)\cap G=\{a_1,\ldots,a_k\}$ with $k>1$ and fix $c>0$, and
choose \[
\mu_1(a_i) = \frac{c}{k} \quad \text{for all } i = 1,\dots,k,
\qquad
\mu_2(a_i) = \begin{cases}
0, & 1 \le i \le k-1, \\
c, & i = k.
\end{cases}
\]
These assignments yield identical angular data, but the $J$-energy $E_J(\mu_1-\mu_2)$ is nonzero under the separation condition \eqref{eq.fibre-separation}. 
A concrete obstruction when $J\equiv 0$ is given by two distinct opposite-directional pairs: For two probability measures $\mu_1(P^{-1}(x)) = \mu_1(P^{-1}(-x)) = 1/2$ and $\mu_2(P^{-1}(y)) = \mu_2(P^{-1}(-y)) = 1/2$, with $x, y \in \mathcal{S}^n$ and $x \neq y$, we have 
$$(\mu_1 - \mu_2)(\pm x) = 1/2 \quad \text{and} \quad (\mu_1 - \mu_2)(\pm y) = -1/2,$$
and
\begin{align*}
2 E_d(\mu_1-\mu_2) \,=\, & d(P^{-1}(x), P^{-1}(-x)) + d(P^{-1}(y), P^{-1}(-y)) \\ &- \left[ d(P^{-1}(x), P^{-1}(y)) + d(P^{-1}(x), P^{-1}(-y)\right] \\ &-  \left[d(P^{-1}(-x), P^{-1}(y)) + d(P^{-1}(-x), P^{-1}(-y))\right] \\
=:\, & E_1 + E_2 - E_3 - E_4.
\end{align*}
When $J \equiv 0$, by \eqref{eq.D}, we can easily obtain $E_{\ell} = \int_{\mathcal S^n}{\rm d}m(t)$ for all $\ell = 1, 2, 3, 4$. Hence, $E_d(\mu_1-\mu_2) = 0$, yet $\mu_1 \neq \mu_2$.

\end{remark}




Theorem \ref{thm.typeIandII} extends the results in Theorems~\ref{Thm.Xd} and \ref{Thm.Gd2} to the almost and essentially $k_p$-star-shaped manifolds.

\begin{theorem}[almost or essentially $k_p$-star-shaped]\label{thm.typeIandII}
Let $\mathcal{X}_0$ be a $k_p$-star-shaped manifold, and $\mathcal{X}\subseteq\mathcal{X}_0$ be an almost $k_p$-star-shaped manifold with respect to $p$. Then
\begin{enumerate}
    \item[(i)]  $(\mathcal{X},d)$ is of NT;
    \item[(ii)]  for a Borel subset $G\subseteq\mathcal{X}$ satisfying (INJ) and (ODA), the metric subspace $(G,d)$ is of SNT.
\end{enumerate}
The same conclusion holds for $\mathcal{X}$ that is essentially $k_p$-star-shaped.
\end{theorem}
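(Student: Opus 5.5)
The idea is to reduce both cases to the structure already used for Theorems~\ref{Thm.Xd} and \ref{Thm.Gd2}. Those proofs need only three ingredients. First, $d=J+D$ with $J$ of NT. Second, $D$ has the feature representation
\[
D(x,y)=\int_{\mathcal S^n}|1_{H_t}(P(x))-1_{H_t}(P(y))|^2\,{\rm d}m(t),
\]
where $m=P_\#\sigma_{\mathcal X}$. Third, $m$ is equivalent to (or dominates) surface measure on $\mathcal S^n$, so the halfspace indicators $t\mapsto 1_{H_t}(s)$ separate measures on directions. In each relaxed setting I would check that these three ingredients survive and then run the earlier argument verbatim.

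\emph{Almost $k_p$-star-shaped case.} Here $J$ is the restriction of the penalty on $\mathcal X_0$, so (i)--(iv) hold on $\mathcal X$. The angular part is also unchanged: $\sigma_{\mathcal X_0}(\mathcal X_0\setminus\mathcal X)=0$ gives $P_\#\sigma_{\mathcal X}=P_\#\sigma_{\mathcal X_0}$, so $D$ on $\mathcal X$ is the restriction of $D$ on $\mathcal X_0$. Hence $(\mathcal X,d)$ is a metric subspace of $(\mathcal X_0,d)$. NT passes to subspaces, so (i) follows from Theorem~\ref{Thm.Xd}. For (ii), a Borel $G\subseteq\mathcal X$ satisfying (INJ) and (ODA) is also a Borel subset of $\mathcal X_0$ satisfying them, so Theorem~\ref{Thm.Gd2}(i) applies directly.

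\emph{Essentially $k_p$-star-shaped case.} Now there is no ambient $\mathcal X_0$, so I would reprove the statements directly.
\begin{enumerate}
\item[(a)] Show $d$ is a metric. Symmetry and the triangle inequality hold because $D^{1/2}$ is an $L^2(m)$ distance and $D$ is the square of it; since $|1_{H_t}(P(x))-1_{H_t}(P(y))|\in\{0,1\}$, the map $x\mapsto 1_{H_\cdot}(P(x))$ embeds into $L^1(m)$, so $D$ itself satisfies the triangle inequality. Finiteness is immediate.
\item[(b)] Prove NT. For signed $\nu$ with $\nu(\mathcal X)=0$,
\[
\iint D\,{\rm d}\nu\,{\rm d}\nu=-2\int_{\mathcal S^n}\Big(\int 1_{H_t}(P(x))\,{\rm d}\nu(x)\Big)^2{\rm d}m(t)\le 0,
\]
and adding the $J$ term, which is $\le 0$ by \eqref{eq.J_negType}, gives (i).
\item[(c)] Prove SNT for (ii). If $E_d(\mu_1-\mu_2)=0$, then both terms vanish. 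Let $\eta=P_\#(\mu_1-\mu_2)$. Then $\eta(H_t)=0$ for $m$-a.e.\ $t$. Since $P$ has full rank $\sigma_{\mathcal X}$-a.e.\ and the image omits only the null set $N$, $m$ is mutually absolutely continuous with $\sigma_{\mathcal S^n}$, so $\eta(H_t)=0$ for $\sigma_{\mathcal S^n}$-a.e.\ $t$. The hemisphere-transform argument underlying Theorem~\ref{Thm.Gd2} (continuity in $t$ up to boundary atoms, plus injectivity of the hemispherical transform modulo antipodal symmetric parts) then forces $\eta=0$ under (ODA). Finally (INJ) makes $P|_G$ injective, so $\mu_1=\mu_2$.
\end{enumerate}

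The main obstacle is step (c): showing that $m$ and $\sigma_{\mathcal S^n}$ are equivalent, in particular that $m$ charges every set of positive surface measure. I would argue as follows. Because $|P^{-1}(t)|\ge1$ off $N$, the image $P(\mathcal X)$ has full measure. By the area formula, $m(A)=\int_A\sum_{x\in P^{-1}(t)}J(x)^{-1}\,{\rm d}\sigma_{\mathcal S^n}(t)$, where $J(x)$ is the Jacobian of $P$ at $x$. The integrand is positive a.e., since the critical values of $P$ have measure zero by Sard-type reasoning for full-rank-a.e.\ maps. If this positivity turns out to be delicate at nonregular points, I would instead rely only on $\sigma_{\mathcal S^n}\ll m$ on full-rank branches, which is all the uniqueness argument uses.
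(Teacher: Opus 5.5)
Your proposal follows the paper's proof essentially step for step. In the almost case you use $P_\#\sigma_{\mathcal X}=P_\#\sigma_{\mathcal X_0}$ and the restricted penalty to get $d_{\mathcal X}=d_{\mathcal X_0}$ on $\mathcal X$, then inherit NT and apply Theorem~\ref{Thm.Gd2}. In the essentially case you use the area formula to get $\sigma_{\mathcal S^n}\ll m$, pass from zero energy to vanishing hemisphere data on a full-measure (hence dense) set of normals, and finish with Lemma~\ref{lem:antisymm}, (ODA) and (INJ).

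The one step you skip is point separation in (a). You announce that $d$ is a metric but only check symmetry, finiteness and the triangle inequality. For $x\neq y$ with $P(x)\neq P(y)$, the set $\{t: |1_{H_t}(P(x))-1_{H_t}(P(y))|=1\}$ is nonempty and open. It therefore has positive $m$-measure by the same $\sigma_{\mathcal S^n}\ll m$ you establish in (c). For $P(x)=P(y)$ you need \eqref{eq.fibre-separation}. This is exactly the check the paper makes for the essentially star-shaped case, and with it your argument is complete.
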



Finally, we extend the above results to the manifolds with incomplete radial coverage on $\mathcal{S}^n$, a class that includes the torus in Example~\ref{ex.torus}. To this end, we need the following half-space measure (HSM) condition, which requires that a probability measure on the projected directions is uniquely determined by its values on the half-space directions.

(HSM). For every finite signed measure $\lambda$ concentrated on $P(G)$,
\begin{equation}\label{eq.restricted-injectivity}
\lambda(H_t)=0\quad\text{for }m\text{-almost every }t\in P(\mathcal X)
\quad\Longrightarrow\quad\lambda=0.
\end{equation}

\begin{theorem}[Incomplete radial coverage]
\label{thm:radial_completion}
Let $\mathcal X\subset\mathbb R^{n+1}$ satisfy $\sigma_{\mathcal X}(\mathcal X)<\infty$, and suppose
$P(\mathcal X)\ne\mathcal S^n$ and $|P^{-1}(t)|\leq k$ for every
$t\in\mathcal S^n$. Then $d$ in \eqref{eq.metric} is a negative-type pseudometric. It is a metric if
\begin{equation}\label{eq.general-separation}
x\ne y \quad\Longrightarrow\quad d(x, y)>0.
\end{equation}
\par\noindent\begin{minipage}{\textwidth}
Moreover, for a Borel subset $G\subseteq\mathcal{X}$ satisfying (INJ), (ODA) and (HSM), 
$(G,d)$ is of SNT whenever $d$ is a metric.

\end{minipage}\par\end{theorem}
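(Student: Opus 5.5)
The plan is to split $d=J+D$ and handle the two pieces separately, as in the proofs of Theorems~\ref{Thm.Xd} and \ref{Thm.Gd2}.

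\emph{Negative type.} The angular part $D$ in \eqref{eq.D} is an $L^2(m)$ distance between feature maps. Set $\Phi(x)\coloneqq 1_{H_{\cdot}}(P(x))\in L^2(\mathcal S^n,m)$, that is, $t\mapsto 1_{H_t}(P(x))$. Then $D(x,y)=\|\Phi(x)-\Phi(y)\|_{L^2(m)}^2$. This is finite because $m(\mathcal S^n)=\sigma_{\mathcal X}(\mathcal X)<\infty$, and it is unaffected by $P(\mathcal X)\neq\mathcal S^n$, since $m$ is simply supported on $E=P(\mathcal X)$. A squared Hilbert distance is of negative type. For $\sum\alpha_i=0$ one expands
\[
\sum\alpha_i\alpha_j\|\Phi(x_i)-\Phi(x_j)\|^2=-2\Bigl\|\sum\alpha_i\Phi(x_i)\Bigr\|^2\le 0 ,
\]
and the same identity holds with integrals against a signed measure $\nu$ with $\nu(\mathcal X)=0$. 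Since $J$ is of negative type by (iii), $d=J+D$ is a negative-type pseudometric. The triangle inequality for $J$ is (i). For $D$ it follows because $\sqrt D$ is a Hilbert distance and $D(x,y)=m(\{t: 1_{H_t}(P(x))\neq 1_{H_t}(P(y))\})$ is the measure of a symmetric difference, hence itself a metric-type quantity. Under \eqref{eq.general-separation} this pseudometric is a metric.

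\emph{SNT.} Let $\mu_1,\mu_2\in\mathcal P_1(G)$ and $\nu=\mu_1-\mu_2$. The identity above gives
\[
E_d(\nu)=E_J(\nu)-2\int_{\mathcal S^n}\bigl(\nu(P^{-1}(H_t))\bigr)^2\,dm(t),
\]
and both terms are $\le 0$. Suppose $E_d(\nu)=0$. Then $\nu(P^{-1}(H_t))=0$ for $m$-a.e.\ $t$. Let $\lambda=P_\#\nu$, a finite signed measure concentrated on $P(G)$. Then $\lambda(H_t)=0$ for $m$-a.e.\ $t\in P(\mathcal X)$. Condition (HSM) now gives $\lambda=0$. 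By (INJ), $P|_G$ is injective and Borel with Borel inverse (by the layer-wise structure). Hence $\nu=(P|_G)^{-1}_\#\lambda=0$, so $\mu_1=\mu_2$.

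The integrability needed to split the energy, and to justify Fubini in the $L^2(m)$ expansion, is routine. $D\le m(\mathcal S^n)$ is bounded. $J$ is finite on bounded sets by (ii), and finite first moments together with the triangle inequality give $\iint J\,d|\nu|\,d|\nu|<\infty$.

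The main obstacle is the role of (ODA). In the complete-coverage case, (ODA) is exactly what makes "$\lambda(H_t)=0$ for all $t$" force $\lambda=0$, since hemisphere functions do not separate symmetric measures. Here that uniqueness is assumed outright via (HSM), so I expect (ODA) to enter only for consistency with Theorem~\ref{Thm.Gd2}, or to pass from $m$-a.e.\ $t$ to the continuity of $t\mapsto\lambda(H_t)$. I would check whether the sets of $t$ where $\lambda(\partial H_t)\neq0$ are $m$-null; such a $t$ must put mass on a great sphere. If needed, I would use (ODA) to control atoms at antipodal pairs. The remaining care point is measurability of $(P|_G)^{-1}$. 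Following the convention for $P_i^{-1}$, I would get it from the Lusin–Souslin theorem, since $P|_G$ is an injective Borel map on a Borel set.
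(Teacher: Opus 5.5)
Your proposal is correct and is essentially the paper's proof. The paper likewise notes that neither the binary triangle inequality nor the energy identity \eqref{eq.I} uses full radial coverage, and it obtains SNT exactly as you do: both nonpositive terms vanish, (HSM) is applied to $P_\#\nu$, and the Borel inverse of $P|_G$ finishes the argument, with (ODA) never used. The paper also sketches an optional second proof of NT via $m+\varepsilon^n\sigma_{\mathcal S^n}$.

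One small correction: the triangle inequality for $D$ comes from the symmetric-difference bound $m(A_x\triangle A_z)\le m(A_x\triangle A_y)+m(A_y\triangle A_z)$, where $A_x=\{t: t\cdot P(x)>0\}$. It does not come from $\sqrt D$ being a Hilbert distance, which only gives the triangle inequality for $\sqrt D$.
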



\subsection{Examples}\label{sec.example}

To illustrate the theoretical results in Section~\ref{sec.main}, we presents the examples of the ellipsoid and rotational torus that are manifolds with complete and incomplete radial coverage. Furthermore, we consider the product torus that has larger codimension than the rotational torus.

\begin{example}[Ellipsoidal radial manifold]

Consider the ellipsoid
\[
\mathcal E=
\left\{
x\in\mathbb R^{n+1}:\sum_{j=1}^{n+1}\frac{x_j^2}{a_j^2}=1,\quad a_j>0
\right\},
\]
with $p=0$. It admits the radial representation
\[
X(\omega)=r(\omega)\omega,\qquad \omega\in\mathcal S^n,
\]
where
\[
r(\omega)
=
\left(
\sum_{j=1}^{n+1}\frac{\omega_j^2}{a_j^2}
\right)^{-1/2}.
\]
Since each direction has a unique radial intersection, this is the
single-layer case where $L_p(1)=\mathcal E$ with $P_1(X(\omega))=\omega$.

For $x=X(\epsilon)$ and $y=X(\eta)$, the distance in \eqref{eq.metric}
takes the form
\[
d(x,y)
=
J(x,y)
+
\int_{\mathcal A(\epsilon,\eta)}
\rho(\omega)\,d\sigma_{\mathcal S^n}(\omega),
\]
where
\[
\mathcal A(\epsilon,\eta)
=
\{\omega\in\mathcal S^n:
(\omega\cdot\epsilon)(\omega\cdot\eta)<0\},
\]
and
\[
\rho(\omega)
=
r(\omega)^n
\sqrt{1+
|\nabla_{\mathcal S^n}\log r(\omega)|^2}.
\]
Here $\mathcal A(\epsilon,\eta)$ represents the set of hemispheres
separating the projected points, while $\rho$ is the density induced by
the radial deformation of $\mathcal E$.

More explicitly, differentiating $r$ on $\mathcal S^n$ gives
\[
\rho(\omega)=
\frac{\left(\sum_{j=1}^{n+1}\omega_j^2/a_j^4\right)^{1/2}}
{\left(\sum_{j=1}^{n+1}\omega_j^2/a_j^2\right)^{(n+2)/2}}.
\]
This positive continuous density is bounded above and below on the
sphere. Thus, with $J=0$, the distance is comparable with angular
distance and is a metric of NT for each $n \geq 1$.
Theorem~\ref{Thm.Gd2} gives SNT on Borel subsets
with at most one antipodal pair of directions, including any open
half-ellipsoid. The volume formula follows from the radial graph
metric $r^2I+\nabla r(\nabla r)^\top$.

To recover the explicit calculation in $\mathbb R^3$, set $n=2$,
$(a_1,a_2,a_3)=(a,b,c)$, and let
$\epsilon=(1,0,0)$,
$\eta=(\cos\alpha,\sin\alpha,0)$.
Then the separating condition reduces to
$\cos\theta\cos(\theta-\alpha)<0$,
and hence
\[
\begin{aligned}
d(x,y)=J(x,y) 
+ \int_{\pi/2}^{\pi/2+\alpha}\int_0^\pi \rho(\phi,\theta)\sin\phi\,d\phi d\theta 
+ \int_{3\pi/2}^{3\pi/2+\alpha}\int_0^\pi \rho(\phi,\theta)\sin\phi\,d\phi d\theta.
\end{aligned}
\]

In the isotropic case where $a=b=c=R$, the density becomes constant, i.e.,
$\rho(\omega)=R^2$, and therefore the slicing contribution depends only
on the angular separation:
\[
d(x,y)=J(x,y)+4R^2\alpha
\]
with the standard unnormalized sphere measure used here. For arbitrary
$n$, when all $a_j=R$, the corresponding term is
$R^n\sigma_{\mathcal S^n}(\mathcal S^n)\alpha/\pi$. Thus the ellipsoidal case can be
viewed as a weighted spherical separation, where the radial geometry
modifies the pushforward measure on $\mathcal{S}^n$ but preserves the underlying spherical
partition.
\end{example}

\begin{example}[Rotational toroidal hypersurface]\label{ex.torus}
Consider the rotational toroidal hypersurface
\[
\mathcal Q^n_{R,r}=\{(x',x_{n+1})\in\mathbb R^n\times\mathbb R: (\|x'\|-R)^2+x_{n+1}^2=r^2\},\qquad n\geq2,\quad R>r>0,
\]
and take $p=0$, which lies in the central hole. Unlike the ellipsoid case, the radial projection is neither globally one-to-one nor onto $\mathcal S^n$, so $\mathcal Q^n_{R,r}$ provides an example in which the layer structure and incomplete radial
coverage occur simultaneously. 
For $n=2$, this is the standard torus in $\mathbb R^3$. For $n>2$, this hypersurface is diffeomorphic to $\mathcal S^{n-1}\times\mathcal S^1$, not to the product of $n$ circles. The calculation below includes this case without a separate proof.

\emph{Layer structure.} Its parameterisation is
\begin{equation}\label{eq.torusPara}
    ((R+r\cos v)u,r\sin v),\qquad (u,v)\in\mathcal S^{n-1}\times\mathcal S^1.
\end{equation}
For a unit direction $t=(\sqrt{1-z^2}\,u,z)$ with $u\in\mathcal S^{n-1}$ and $z=t_{n+1}$, set $a=r/R$, $q=\sqrt{1-z^2}$ and $s(z)=\sqrt{r^2-R^2z^2}$. The radial equation is $$\rho^2-2Rq\rho+R^2-r^2=0,$$
with roots $\rho_\pm=Rq\pm s(z)$. Both roots are positive when $|z|<a$. At $|z|=a$ they coincide; the tangent set is a smooth $(n-1)$-dimensional set and has zero
$\mathcal H^n$ measure, assigned to the first layer only. Hence, over the open belt 
$$B_{R,r}^\circ=\{t\in\mathcal S^n: |t_{n+1}|<a\},$$
$\mathcal Q^n_{R,r}$ admits the two radial branches $L_p(\pm)$. The parametrisation \eqref{eq.torusPara} gives
$$\dd\mathcal H^n=r(R+r\cos v)^{n-1} \dd\sigma_{\mathcal S^{n-1}}(u)\dd v,$$
so the total volume is finite and equals
\begin{equation}\label{eq.Q-volume}
    r\sigma_{\mathcal S^{n-1}}(\mathcal S^{n-1})\int_0^{2\pi}(R+r\cos v)^{n-1}\dd v.
\end{equation}

\emph{The metric.} The unit normal is $N=(u\cos v,\sin v)$. On a radial branch $L_p(+)$ or $L_p(-)$, $|N\cdot t|=s(z)/r$, and the radial projection has Jacobian $|N\cdot t|/\rho^n$; equivalently, this follows by taking the determinant of the radial graph metric. Hence the two directional densities and their sum are
\begin{equation}\label{eq.Q-density}
g_\pm(t)=\frac{r\rho_\pm(t)^n}{s(t_{n+1})},\qquad g(t)=\frac{r[\rho_-(t)^n+\rho_+(t)^n]}{s(t_{n+1})}.
\end{equation}
They are positive on the open belt $B_{R,r}^\circ$ and integrable over its closure $B_{R,r}$. Indeed,
\[
\dd\sigma_{\mathcal S^n}(t)
=(1-z^2)^{(n-2)/2}\dd\sigma_{\mathcal S^{n-1}}(u)\dd z,
\]
and the only endpoint singularity in \eqref{eq.Q-density} is of order $(a-|z|)^{-1/2}$. Its integral is precisely the volume in \eqref{eq.Q-volume}.

For $c>0$, let $J(x,y)=c1_{\{i_p(x)\ne i_p(y)\}}$ that satisfies the separation condition~\eqref{eq.fibre-separation}.
Then the $n$-dimensional metric is
\begin{equation}\label{eq.Q-metric}
d_Q(x,y)=J(x,y)+\int_{-a}^{a}\int_{\mathcal S^{n-1}}
1_{\{(t\cdot P(x))(t\cdot P(y))<0\}} w(t)(1-z^2)^{(n-2)/2}\dd\sigma_{\mathcal S^{n-1}}(u)\dd z.
\end{equation}


\emph{Negative type.} Here the separation condition \eqref{eq.general-separation} can be proved directly. Let $t_1=(h_1,z_1)$ and $t_2=(h_2,z_2)$ be two distinct directions in $B_{R,r}$, with $h_1,h_2\in\mathbb R^n\setminus\{0\}$. If $h_1$ and $h_2$ are not positive multiples, choose a horizontal unit vector $\omega\in\mathcal S^{n-1}$ such that $\omega\cdot h_1>0>\omega\cdot h_2$. Then the normal $(\omega,0)$ strictly separates $t_1$ and $t_2$. Otherwise $h_i=\alpha_i u$, $i = 1, 2$, for some $\alpha_i>0$ and $\|u\|=1$. Since $t_1$ and $t_2$ are two distinct directions, we have $z_1/\alpha_1\ne z_2/\alpha_2$. Choose $\ell$ strictly between these ratios and a unit vector $v\perp u$. For sufficiently small $\varepsilon\ne0$, the normal proportional to $(v-\varepsilon \ell u,\varepsilon)$ has opposite inner products with $t_1$ and $t_2$, and lies in $B_{R,r}^\circ$. In either case, the set of separating normals is open and intersects $B_{R,r}^\circ$, so the second term in \eqref{eq.Q-metric} is positive. The penalty term $J$ separates points in the same fibre but different layers. Thus $d_Q$ is a metric, and $(\mathcal Q^n_{R,r}, d_Q)$ is of NT by Theorem~\ref{thm:radial_completion}.

Moreover, Proposition~\ref{prop:torus-positive-patch} states that a positive-volume patch of the outer or inner layer of $\mathcal Q^n_{R,r}$ is of SNT.


\end{example}

\begin{proposition}[SNT of a positive-volume patch]\label{prop:torus-positive-patch}
For every $n\geq2$, choose $0<\delta<a$ and $0<\eta<1$, and define
\[
U=\{t\in\mathcal S^n:|t_{n+1}|<a-\delta,\ t_1>\eta\},
\qquad G_+=P_+^{-1}(U), \qquad G_-=P_-^{-1}(U).
\]
Then $\mathcal H^n(G_+)>0$ and $\mathcal H^n(G_-)>0$, and the restriction of $d_Q$ to $G_+$ or $G_-$
is SNT.
\end{proposition}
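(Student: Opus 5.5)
The plan is to reduce the claim to Theorem~\ref{thm:radial_completion}, checking its hypotheses for $G=G_\pm$: the first part of the torus example already gives that $d_Q$ is a metric on $\mathcal Q^n_{R,r}$, so what remains is (INJ), (ODA) and (HSM). Positivity of volume comes first. The set $U$ is a nonempty open subset of the open belt $B_{R,r}^\circ$; for instance $t=e_1$ lies in $U$, since $t_{n+1}=0<a-\delta$ and $t_1=1>\eta$. Over $U$ the branches $P_\pm^{-1}$ are $C^1$ diffeomorphisms with densities $g_\pm$ from \eqref{eq.Q-density}. Because $|z|<a-\delta$, we have $s(z)\ge \sqrt{r^2-R^2(a-\delta)^2}>0$ and $\rho_\pm>0$, so $g_\pm$ is bounded above and below on $U$. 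Hence $\mathcal H^n(G_\pm)=\int_U g_\pm\,d\sigma_{\mathcal S^n}>0$.

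(INJ) holds because $G_\pm$ is the image of a single branch, so each fibre meets it in at most one point. (ODA) holds in the strong form that no antipodal pair occurs at all: every $t\in U$ has $t_1>\eta>0$, so $-t\notin U$. Thus $P(G_\pm)=U$ lies in the open hemisphere $H_{e_1}$.

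The main obstacle is (HSM). Let $\lambda$ be a finite signed measure on $U$ with $\lambda(H_t)=0$ for $m$-a.e.\ $t\in P(\mathcal Q)=B_{R,r}$. Since $m$ has a density on $B_{R,r}^\circ$ that is positive a.e., this means $\lambda(H_t)=0$ for Lebesgue-a.e.\ $t$ in the open belt. The function $t\mapsto\lambda(H_t)$ is not continuous in general, but it is continuous at every $t$ with $|\lambda|(\{z:t\cdot z=0\})=0$, so the a.e.\ vanishing only carries information on an open set of normals. I would therefore use an analyticity or uniqueness argument for the hemispherical transform restricted to an open set of normals.

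My first attempt is to work with the function $F(t)=\lambda(H_t)$ on $\mathbb R^{n+1}\setminus\{0\}$, which is homogeneous of degree $0$. Its distributional derivative in $t$ is a surface measure that encodes the Radon-type transform of $\lambda$ over the hyperplanes $\{z:t\cdot z=0\}$. Vanishing of $F$ on the open cone over $B_{R,r}^\circ$ forces these hyperplane integrals of $\lambda$ to vanish for normals in that cone. For the support $U\subset H_{e_1}$, I would pass to the gnomonic chart $z\mapsto z/z_1$, which maps $H_{e_1}$ onto the affine plane $\{z_1=1\}\cong\mathbb R^n$. Hyperplanes through the origin with normal $t$ then become affine hyperplanes $\{w:t_1+t'\cdot w=0\}$, and the pushed-forward measure $\tilde\lambda$ is compactly supported in the bounded image of $U$. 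The normals in the belt with $t_{n+1}$ near $0$ give an open set of affine hyperplanes, with all directions $t'$ in an open cone and all offsets in an interval. The vanishing of the Radon transform of $\tilde\lambda$ on this set means its Fourier transform vanishes on an open cone of frequencies. Since $\tilde\lambda$ has compact support, its Fourier transform is real-analytic on $\mathbb R^n$, so it vanishes identically. Hence $\tilde\lambda=0$ and $\lambda=0$.

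The delicate points are making the differentiation of $F$ rigorous for a general signed measure and checking that the belt supplies enough offsets. Once (HSM) is in place, Theorem~\ref{thm:radial_completion} gives that $(G_\pm,d_Q)$ is SNT.
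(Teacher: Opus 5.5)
Your proposal is correct and follows essentially the paper's route. The paper also reduces the claim to (HSM) in Theorem~\ref{thm:radial_completion}, passes to the gnomonic chart $\Gamma(z)=(z_2/z_1,\ldots,z_{n+1}/z_1)$, and finishes with the Fourier slice identity, real-analyticity of $\widehat{\widetilde\lambda}$ for compactly supported $\widetilde\lambda$, and Fourier uniqueness. Both delicate points you flag are settled there by parametrizing normals as $t(s,\omega)=(-s,\omega)/\sqrt{1+s^2}$ with $|\omega_n|<a-\delta$. First, every real offset $s$ keeps $t$ in the open belt, so you get all offsets rather than merely an interval, which is what the slice argument needs. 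Second, one only has to differentiate the one-variable function $F_\omega(s)=\widetilde\lambda\{\omega\cdot u>s\}$, whose distributional derivative is $-(u\mapsto\omega\cdot u)_\#\widetilde\lambda$.
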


\begin{example}[Product torus and higher codimension]
Consider the product torus $\mathcal{T}^n_a =\prod_{j=1}^n\mathcal S^1(a_j)\subset\mathbb R^{2n}$ with $a_j >0$ and $n\geq 1$.  Different from $\mathcal Q^n_{R,r}$, $\mathcal{T}^n_a$ has codimension $n$, rather than $1$. For $\theta \in \mathcal{T}^n_a$, its parameterisation is 
$$X(\theta)=(a_1\cos\theta_1,a_1\sin\theta_1,\ldots, a_n\cos\theta_n,a_n\sin\theta_n),$$
and the induced volume and radial map are
$$\dd\mathcal H^n=\left(\prod_{j=1}^n a_j\right) \dd\theta_1\cdots\dd\theta_n,\qquad P(X(\theta))=X(\theta)/A,$$
where $A = \sum_j a_j^2$ is the standardisation constant. For $\theta, \vartheta\in \mathcal{T}^n_a$ and $\varepsilon>0$, let 
\begin{equation}\label{eq.Jtorus}
J(\theta, \vartheta) = \varepsilon\|X(\theta)-X(\vartheta)\| = 2\varepsilon\left(\sum_{j=1}^n a_j^2 \sin^2\frac{\theta_j-\vartheta_j}{2}\right)^{1/2},
\end{equation}
$$K(\phi,\theta)=X(\phi)\cdot X(\theta) = \sum_{j=1}^n a_j^2\cos(\phi_j-\theta_j)$$
and
$$D_T(\theta,\vartheta)= \left(\prod_{j=1}^n a_j\right)\int_{[0,2\pi)^n} \left|1_{\{K(\phi,\theta)>0\}}- 1_{\{K(\phi,\vartheta)>0\}}\right|^2\dd\phi,$$ 
which is a finite negative-type metric. Then $d$ in \eqref{eq.metric} takes the form
\begin{equation}\label{eq.product-strong}
d_{T,\varepsilon}(\theta,\vartheta) =D_T(\theta,\vartheta)+ 2\varepsilon\left(\sum_{j=1}^n a_j^2 \sin^2\frac{\theta_j-\vartheta_j}{2}\right)^{1/2},
\end{equation}
 
\end{example}

Proposition~\ref{prop:product-torus} establishes a strong-negative-type result for every genuine $n$-torus $\mathcal{T}^n_a$ without identifying it with $\mathcal S^{n-1}\times\mathcal S^1$ or applying the hypersurface coverage theorem in the wrong ambient dimension. This example is not intended to illustrate the condition (ODA), as $\mathcal{T}^n_a$ contains uncountable many opposite directions; rather, it shows a different route to SNT: one may add a penalty term that is itself of SNT.


\begin{proposition}\label{prop:product-torus}
For every $\varepsilon>0$, the metric $d_{T,\varepsilon}$ in \eqref{eq.product-strong} is SNT on the whole product torus $\mathcal{T}^n_a$ and induces its ordinary topology.
\end{proposition}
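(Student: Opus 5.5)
The plan is to split $d_{T,\varepsilon}=D_T+J$ and treat the two summands separately. For NT, I would note that $D_T$ is a nonnegative mixture of cut pseudometrics $|1_{A}(\theta)-1_{A}(\vartheta)|^2$ with $A=\{\theta:K(\phi,\theta)>0\}$. Each such cut pseudometric is of NT, since for $\sum\alpha_i=0$ we have $\sum_{i,j}\alpha_i\alpha_j|f_i-f_j|^2=-2(\sum_i\alpha_i f_i)^2\le0$ when $f$ takes values in $\{0,1\}$. Integrating against $\dd\phi$ shows $D_T$ is of NT, and indeed it satisfies the integral inequality \eqref{eq.J_negType} for measures with finite first moment, because $D_T$ is bounded. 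This is the same argument as for Theorem~\ref{Thm.Xd}/\ref{thm:radial_completion}, now in ambient dimension $2n$. The term $J=\varepsilon\|X(\theta)-X(\vartheta)\|$ is a multiple of Euclidean distance restricted to $X(\mathcal T^n_a)\subset\mathbb R^{2n}$. Euclidean space is of SNT (Lyons 2013), and subsets inherit SNT. Since $X$ is injective on $[0,2\pi)^n$, $J$ is a metric, and so is $d_{T,\varepsilon}$.

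For SNT, take $\mu_1,\mu_2\in\mathcal P_1$; all moments are finite because $d_{T,\varepsilon}$ is bounded on the compact torus. Write $\nu=\mu_1-\mu_2$. Then
\[
E_{d}(\nu)=E_{D_T}(\nu)+\varepsilon E_{\|\cdot\|}(X_\#\nu).
\]
Both terms are $\le0$ by the NT step. If the sum is $0$, each term is $0$. In particular $E_{\|\cdot\|}(X_\#\nu)=0$, and SNT of $\mathbb R^{2n}$ gives $X_\#\mu_1=X_\#\mu_2$. Injectivity of $X$ then gives $\mu_1=\mu_2$. Measurability is harmless here, since $X$ is a homeomorphism onto its image. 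So the SNT part is straightforward once the penalty is itself SNT, and the delicate directional conditions are not needed.

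For the topology, I must show that $d_{T,\varepsilon}$-convergence is equivalent to convergence in the usual angle topology. One direction is easy: $d_{T,\varepsilon}\ge J$, and $J$ induces the ordinary topology because $X$ is an embedding of a compact space. For the converse, I need $D_T(\theta,\vartheta)\to0$ as $\vartheta\to\theta$. The integrand is $1$ only when $K(\phi,\theta)$ and $K(\phi,\vartheta)$ have different signs, or one vanishes. As $\vartheta\to\theta$ we have $K(\phi,\vartheta)\to K(\phi,\theta)$ uniformly in $\phi$. Hence the integrand tends to $0$ at every $\phi$ with $K(\phi,\theta)\neq0$, and dominated convergence finishes the argument.

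The main obstacle is proving that the zero set $Z_\theta=\{\phi:K(\phi,\theta)=0\}$ has Lebesgue measure zero. I would argue that $\phi\mapsto\sum_j a_j^2\cos(\phi_j-\theta_j)$ is a nonconstant real-analytic function on $\mathbb R^n$, since $a_j>0$. The zero set of a nonconstant real-analytic function is Lebesgue-null. Alternatively, the gradient vanishes only where all $\sin(\phi_j-\theta_j)=0$, which is a finite set, and off this set the zero set is a $C^1$ hypersurface. With this, $D_T$ is continuous, and the two metrics induce the same compact topology.
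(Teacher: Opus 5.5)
Your proposal is correct and follows essentially the paper's route: $D_T$ is of NT by the cut-pseudometric identity \eqref{eq.ht}, and strictness comes from SNT of Euclidean space applied to the chord penalty together with injectivity of $X$. The topology likewise follows from $d_{T,\varepsilon}\ge J$ and dominated convergence for $D_T$. The only difference is cosmetic: you show that $\{\phi: K(\phi,\theta)=0\}$ is Lebesgue-null via real-analyticity (or a regular-value argument), whereas the paper fixes all but one angle, notes the resulting one-variable zero set is finite, and applies Fubini.
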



\section{Proofs}\label{sec.proof}

This section contains proofs of the results in Section~\ref{sec.main}. The following lemma is essential for proving that $d$ is a metric.

\begin{lemma}\label{lem.volume-positive}
Let $P_i^{-1}:P(L_p(i))\rightarrow L_p(i)$ be the inverse radial
projection of the $i$-th layer. For every measurable set
$A\subseteq P(L_p(i))$ with
$\sigma_{\mathcal S^n}(A)>0$,
we have
$\sigma_i(P_i^{-1}(A))>0$.
\end{lemma}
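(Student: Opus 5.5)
The plan is to reduce the claim to the fact that a Lipschitz map cannot increase $\mathcal H^n$-measure from zero to positive, applied to the radial projection $P_i$ itself rather than to its inverse. The key observation is that $P$ is locally Lipschitz on $\mathbb R^{n+1}\setminus\{p\}$, and that $P_i\colon L_p(i)\to P(L_p(i))$ is a bijection, so that $A=P_i\bigl(P_i^{-1}(A)\bigr)$ for every $A\subseteq P(L_p(i))$.

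First we show that $P$ is globally Lipschitz on $\mathcal X$. Since $p\notin\mathcal X$ and $\mathcal X$ is bounded, we would like a positive lower bound $\|x-p\|\ge r_0>0$ on $\mathcal X$. If $p$ lies in the closure $\overline{\mathcal X}$ this fails globally, so we work instead on the sets $\mathcal X_j=\{x\in\mathcal X:\|x-p\|\ge 1/j\}$. These sets exhaust $\mathcal X$, and on each of them the elementary estimate
\[
\Bigl\|\tfrac{x-p}{\|x-p\|}-\tfrac{y-p}{\|y-p\|}\Bigr\|\le \frac{2\|x-y\|}{\min(\|x-p\|,\|y-p\|)}
\]
shows that $P$ is $2j$-Lipschitz.

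Next comes the contrapositive. Suppose $\sigma_i(P_i^{-1}(A))=0$. Then $\mathcal H^n\bigl(P_i^{-1}(A)\cap\mathcal X_j\bigr)=0$ for every $j$. Since Lipschitz maps satisfy $\mathcal H^n(f(S))\le \mathrm{Lip}(f)^n\mathcal H^n(S)$, we get $\mathcal H^n\bigl(P(P_i^{-1}(A)\cap\mathcal X_j)\bigr)=0$ for each $j$. Taking the countable union over $j$ and using surjectivity of $P_i$ onto $A$ gives $\mathcal H^n(A)=0$. Because $\sigma_{\mathcal S^n}$ agrees with $\mathcal H^n$ restricted to $\mathcal S^n$ (up to the normalisation fixed in the paper), this gives $\sigma_{\mathcal S^n}(A)=0$, a contradiction.

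The main point to check is measurability. The set $P_i^{-1}(A)$ must be Borel or at least $\mathcal H^n$-measurable so that $\sigma_i$ of it makes sense. This follows from the standing statement that $P_i$ is an injective Borel map with Borel inverse; in any case the Hausdorff outer-measure argument above does not need it. One further remark: on regular branches one could argue instead via $dm_i=J_i\,d\sigma_{\mathcal S^n}$ with $J_i>0$. That route fails at nonregular points, where the Jacobian degenerates, which is why the Lipschitz image argument is preferable. It needs no regularity of $P_i^{-1}$ at all, only Lipschitz continuity of the forward map $P$.
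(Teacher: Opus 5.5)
Your proof is correct, and it differs from the paper's mainly in the tool used. Both arguments rest on the same two facts. First, the forward map $P$ is locally Lipschitz away from $p$. Second, $P_i$ is a bijection of $L_p(i)$ onto $P(L_p(i))$, so that $A=P\bigl(P_i^{-1}(A)\bigr)$.

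The paper applies the area formula to get the exact identity $\sigma_{\mathcal S^n}(A)=\int_{P_i^{-1}(A)}\operatorname{Jac}_nP\,d\sigma_i$ and reads off positivity. It then adds a radial determinant computation on regular branches.

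You use only the estimate $\mathcal H^n(f(S))\le \mathrm{Lip}(f)^n\,\mathcal H^n(S)$ on the exhausting sets $\mathcal X_j$. This is more elementary: it follows directly from the definition of Hausdorff measure. It needs no rectifiability, no Rademacher theorem and no Jacobian, and it works at the level of outer measure, so measurability of $P_i^{-1}(A)$ is not even required.

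The price is that you obtain only the one-sided statement, namely $\sigma_{\mathcal S^n}\ll m_i$ on $P(L_p(i))$. The paper's identity also identifies the density of $m_i$. Where $\operatorname{Jac}_nP>0$ almost everywhere, it also yields the reverse absolute continuity mentioned in the proof of Theorem~\ref{thm.typeIandII}. For the lemma as stated, your one-sided inequality is exactly what is needed.

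One small point of exposition: drop the opening claim that $P$ is globally Lipschitz on $\mathcal X$. You immediately and correctly replace it with the exhaustion by the sets $\mathcal X_j$.
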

\begin{proof}
Let $A\subseteq P(L_p(i))$ be Borel with positive spherical volume.
Since $P$ is locally Lipschitz away from $p$, the area formula gives
\begin{equation}\label{eq.area-formula}
\sigma_{\mathcal S^n}(A)=\int_{P_i^{-1}(A)}\operatorname{Jac}_nP(x)\,d\sigma_i(x).
\end{equation}
Consequently $\sigma_i(P_i^{-1}(A))>0$, including at nonregular layers.
For completeness, the original radial determinant calculation on a
regular branch is as follows. In coordinates centred at $p$, write
$x=r(\theta)\theta$ with $r>0$ smooth.

Taking local coordinates $(u^1,\dots,u^n)$ on $L_p(i)$, we have
\[x(u^1,\dots,u^n) = r(\theta(u))\,\theta(u),\]
and the tangent vectors are
\[\frac{\partial x}{\partial u^{\alpha}} = \frac{\partial r}{\partial u^{\alpha}}\,\theta + \frac{\partial\theta}{\partial u^{\alpha}} r.\]
Hence the induced metric (the first fundamental form) has components
\[g_{\alpha\beta}^x = \frac{\partial r}{\partial u^{\alpha}}\frac{\partial r}{\partial u^{\beta}} + r^2 h_{\alpha\beta}^x,\]
with $h_{\alpha\beta}^x \coloneqq \frac{\partial\theta}{\partial u^{\alpha}}\cdot\frac{\partial\theta}{\partial u^{\beta}}$, where the cross terms vanish due to $\theta\cdot\frac{\partial\theta}{\partial u^{\alpha}}=0$. 
In matrix form,
\[
g^x = r^2 H + (\nabla r)(\nabla r)^\top,
\]
where $H$ is the metric tensor of the unit sphere $\mathcal{S}^n$ (in the coordinates $u^{\alpha}$). Since $r(\theta) > 0$ and $H$ is positive definite, $g^x$ is invertible. Applying elementary operations (or applying directly the matrix determinant lemma) to the determinant $\det(g^x)$ yields
\begin{align*}
\det(g^x) &=
\begin{vmatrix}
r^2H+(\nabla r)(\nabla r)^\top & \vec{0} \\
\vec{0} &  1
\end{vmatrix}
=
\begin{vmatrix}
    r^2H+(\nabla r)(\nabla r)^\top & \vec{0} \\
(\nabla r)^\top &  1
\end{vmatrix}
=
\begin{vmatrix}
     r^2H & -(\nabla r) \\
(\nabla r)^\top &  1
\end{vmatrix}
\\
&=\begin{vmatrix}
       r^2H & \vec{0} \\ 
(\nabla r)^\top &  1+(\nabla r)^\top(r^2H)^{-1}(\nabla r)
\end{vmatrix}
=\det(r^2 H)\bigl(1 + (\nabla r)^\top (r^2 H)^{-1} (\nabla r)\bigr)
\end{align*}
Using the standard angular coordinates $x = (\theta^1,\dots,\theta^n)$ on $\mathcal{S}^n$, we obtain
\[H = \operatorname{diag}\Bigl(1,\; \sin^2(\theta^1),\; \sin^2(\theta^1)\sin^2(\theta^2),\; \dots,\; \prod_{i=1}^{n-1}\sin^2(\theta^i)\Bigr).\]
Consequently,
\[{\rm d}\sigma_{}\bigl(P_i^{-1}({t})\bigr)
= r^n \sqrt{\det(H)}\;\sqrt{1 + \frac{(\nabla r)^\top H^{-1}(\nabla r)}{r^2}}\; {\rm d}t.\]
Therefore,
$\sigma_i(P_i^{-1}(A))>0$ whenever $\sigma_{\mathcal S^n}(A)>0$, as asserted.

\end{proof}


\begin{proof}[\normalfont\bfseries Proof of Theorem~\ref{Thm.Xd}] We first verify that $d$ is a metric on $\mathcal{X}$.
Let $D=d-J$ and $E_d(\nu)=\iint d(x,y)\,{\rm d}\nu(x){\rm d}\nu(y)$,
and define $E_J$ analogously. Note that $d$ is finite due to $0\leq D\leq m(\mathcal S^n)$, and it is symmetric and
zero on the diagonal. Now we verify positivity of $d$ for $x\ne y$. If $P(x)\neq P(y)$, the set 
$$\{t\in \mathcal{S}^n: \textcolor{black}{|1_{H_t}(P(x))-1_{H_t}(P(y))|^2=1}\}$$
is a nonempty open subset\footnote{Indeed, it is a union of two spherical lunes.} of \(\mathcal S^n\), hence has positive spherical volume. Since $P(L_p(1))=\mathcal S^n$,
Lemma~\ref{lem.volume-positive} gives $D(x,y)>0$. If $P(x)=P(y)$
but $x\neq y$, \eqref{eq.fibre-separation} gives $d(x,y)>0$. Next, we verify the triangle inequality. For $a,b,c\in\{0,1\}$,
\[
|a-b|^2=|a-b|\leq|a-c|+|c-b|=|a-c|^2+|c-b|^2.
\]
Applying this pointwise and integrate, and adding the triangle inequality
for $J$ gives the triangle inequality for $d$.


Let $\nu$ be a finite signed measure with $\nu(\mathcal X)=0$ and integrable $J$-energy $E_J(\nu)$. Finiteness
of $m$ justifies Fubini's theorem for the angular term $D$. With
$h_t(x)= 1_{H_t}(P(x))$, expansion gives
\begin{align}
\iint|h_t(x)-h_t(y)|^2\dd\nu(x)\dd\nu(y)
&=\iint(h_t(x)+h_t(y)-2h_t(x)h_t(y))\dd\nu(x)\dd\nu(y) \nonumber\\
&=-2\left(\int h_t\dd\nu\right)^2. \label{eq.ht}
\end{align}
To spell out the cancellation, put $a_t=\int h_t\,d\nu$. Since
$h_t^2=h_t$, the first two terms in this expansion are
\[
\iint h_t(x)\,d\nu(x)d\nu(y)=a_t\nu(\mathcal X)=0,
\qquad
\iint h_t(y)\,d\nu(x)d\nu(y)=\nu(\mathcal X)a_t=0,
\]
whereas the mixed term is $-2a_t^2$. Also, the layer partition gives
\[
a_t=\nu(P^{-1}(H_t))=\sum_{j=1}^k\nu(P_j^{-1}(H_t)).
\]
Thus the layer decomposition is performed before taking the square, so
cross terms between different layers are retained.
The two linear terms vanish because $\nu(\mathcal X)=0$. Therefore,
\begin{align}
E_d(\nu)&=E_J(\nu)-2\int_{\mathcal S^n}\bigl[\nu(P^{-1}(H_t))\bigr]^2\dd m(t)
\label{eq.I}\\
&=E_J(\nu)-2\sum_{i=1}^k\int_{\mathcal S^n}
\left[\sum_{j=1}^k\nu(P_j^{-1}(H_t))\right]^2\dd m_i(t)\leq0.
\nonumber
\end{align}
This proves NT for finitely supported $\nu$. 

Indeed, for
$\nu=\mu_1-\mu_2$ with $\mu_\ell\in\mathcal P_1(\mathcal X,d)$, the
energy is integrable by the triangle inequality and $J\leq d$.
More precisely, fixing $x_0\in\mathcal X$ and writing
$\eta=\mu_1+\mu_2$, one has $|\nu|\leq\eta$ and
\[
\iint d(x,y)\,{\rm d}|\nu|(x) {\rm d}|\nu|(y)
\leq 2\eta(\mathcal X)\int d(x,x_0)\,{\rm d}\eta(x)<\infty.
\]
This also bounds the absolute $J$-energy, since $0\leq J\leq d$.
Both terms on the right of \eqref{eq.I} are nonpositive. Their sum
can therefore vanish only when $E_J(\nu)=0$ and the integral of
$a_t^2$ vanishes.
Equality in \eqref{eq.I} implies
\begin{equation}\label{eq.equality-data}
\nu(P^{-1}(H_t))=0\qquad\text{for }m\text{-almost every }t.
\end{equation}
\end{proof}

Before proving Theorem~\ref{Thm.Gd1}, we need the following result.

\begin{lemma}\label{lem.injective}
Let $H \subset \mathcal{S}^n$ be an open hemisphere. For a finite signed measure $\mu$ on $P^{-1}(H)$, 
 $t \in \mathcal{S}^n$, and $A\subseteq H$, define measures 
\begin{align*}
\mu_{(i)}'(A) & \coloneqq \mu(P^{-1}_i(A)),\;b_{\mu_{(i)}'}(t) \coloneqq \mu_{(i)}'\,(H\cap H_t), \quad i = 1, \ldots, k \\
\mu'(A) & \coloneqq \mu(P^{-1}(A)),\;b_\mu'(t) \coloneqq \mu '\,(H\cap H_t).
\end{align*}
Then {\rm (i)} both the maps $\mu_{(i)}' \mapsto b_{\mu_{(i)}'}$ and $\mu' \mapsto b_{\mu'}$ are injective, and {\rm (ii)} if $D$ is a dense subset of $\mathcal{S}^n$, then $\mu_{(i)}' \mapsto b_{\mu_{(i)}'}|_D$ and $\mu' \mapsto b_{\mu'}|_D$ are also injective.
\end{lemma}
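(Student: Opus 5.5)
Both maps in (i) have the same form: $\mu'_{(i)}$ and $\mu'$ are finite signed Borel measures on the open hemisphere $H$, since $P_i^{-1}(A)$ and $P^{-1}(A)$ are Borel and $\mu$ is finite. So it suffices to prove the following. A finite signed measure $\lambda$ on $H$ is determined by the function $b_\lambda(t)=\lambda(H\cap H_t)$, $t\in\mathcal S^n$, and already by its restriction to a dense set $D$. By linearity, injectivity means that $b_\lambda\equiv 0$ (respectively $b_\lambda|_D\equiv 0$) forces $\lambda=0$. The plan is to flatten $H$ and reduce to the classical fact that a finite measure on $\mathbb R^n$ is determined by its values on open affine half-spaces.

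\emph{Step 1: gnomonic chart.} Write $H=H_e$ and use the gnomonic map $g\colon H\to\mathbb R^n$, $g(z)=z'/(e\cdot z)$, where $z'$ is the component of $z$ orthogonal to $e$. This map is a homeomorphism. For $t=(w,c')$ with $w\perp e$ and $c'=t\cdot e$, and $z\in H$, we have $t\cdot z>0$ if and only if $w\cdot g(z)>-c'$, because $e\cdot z>0$. Hence $g(H\cap H_t)$ is the open half-space $\{y: w\cdot y>-c'\}$, and every open affine half-space of $\mathbb R^n$ arises this way (normalize $(w,c')$ to a unit vector). Put $\tilde\lambda=g_\#\lambda$. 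Then $b_\lambda\equiv0$ says that $\tilde\lambda(\{w\cdot y>c\})=0$ for all $w\neq 0$ and all $c$.

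\emph{Step 2: Cramér--Wold.} Fix $w$. The pushforward of $\tilde\lambda$ under $y\mapsto w\cdot y$ is a finite signed measure on $\mathbb R$ with vanishing tail $(c,\infty)$ for every $c$. Hence it is zero, because intervals $(a,b]$ are differences of such tails. Its Fourier transform at $s=1$ is $\hat{\tilde\lambda}(w)$, so $\hat{\tilde\lambda}(w)=0$ for all $w\neq0$. The total mass is $\tilde\lambda(\mathbb R^n)=\lim_{c\to-\infty}\tilde\lambda(\{w\cdot y>c\})=0$, which gives $\hat{\tilde\lambda}(0)=0$ as well. Uniqueness of Fourier--Stieltjes transforms gives $\tilde\lambda=0$, hence $\lambda=0$. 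This proves (i).

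\emph{Step 3: the dense set (ii), the main obstacle.} The function $b_\lambda$ need not be continuous, since mass may sit on great spheres. So I would instead show that for every $t$ there is a sequence $t_n\in D$ with $b_\lambda(t_n)\to b_\lambda(t)$. Then $b_\lambda|_D\equiv0$ implies $b_\lambda\equiv0$, and (i) applies. In the chart, $t$ corresponds to the pair $(w,c)$. Choose perturbed parameters $(w_n,c_n)$ with $c_n\downarrow c$ and $|w_n-w|\le (c_n-c)/n$; by density of $D$ (the normalization map is a local homeomorphism) these can be realized by $t_n\in D$. Now fix $y$. If $w\cdot y>c$, then eventually $w_n\cdot y>c_n$. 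If $w\cdot y\le c$, then
\[
w_n\cdot y\le c+|w_n-w|\,|y|<c_n
\]
once $n>|y|$. So the indicators converge pointwise to $1_{\{w\cdot y>c\}}$, and dominated convergence with respect to $|\tilde\lambda|$ gives the claim. The delicate point is arranging the perturbation so that boundary points are eventually excluded, not merely the interior points included; the relative rate condition above is what handles this. Directions $t=\pm e$, for which $w=0$, are trivial, since then $H\cap H_t$ is either $H$ or empty.
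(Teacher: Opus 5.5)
Your proof is correct, but it takes a different route from the paper in both parts.

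For (i), the paper simply cites Theorem 1.3 of Lyons (2020) for finite signed measures on an open hemisphere. You prove that fact from scratch: the gnomonic chart reduces it to open affine half-spaces of $\mathbb R^n$, and Cram\'er--Wold plus Fourier uniqueness finish it. This is the same device the paper itself uses later for the torus patch. It makes the lemma self-contained at the cost of a few lines.

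For (ii), the paper argues in two stages.
\begin{itemize}
\item It first gets $\lambda(H\cap H_t)=0$ at continuity normals, i.e.\ those with $|\lambda|(H\cap\partial H_t)=0$, by taking any $t_\ell\in D$ with $t_\ell\to t$ and using dominated convergence.
\item It then reaches an arbitrary $t$ by continuity from below along normals proportional to $t-\varepsilon_j a$. The $\varepsilon_j$ are chosen so that the level sets $\{(t\cdot x)/(a\cdot x)=\varepsilon_j\}$ are $|\lambda|$-null, which is possible because only countably many levels carry mass.
\end{itemize}
In your chart, those normals are exactly the offset shifts $c\mapsto c+\varepsilon_j$ with $w$ fixed. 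Your extra tilt constraint $|w_n-w|\le (c_n-c)/n$ lets you take the approximating normals directly in $D$ and still get pointwise convergence of the indicators everywhere. So you replace both the continuity-normal step and the countable-atoms argument with a single limit.

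Two small points of wording.
\begin{itemize}
\item The realizability of $t_n\in D$ is best justified as follows. The admissible parameter set $\{(w',c'): c<c'<c+1/n,\ |w'-w|<(c'-c)/n\}$ is open and nonempty. Its image under $(w',c')\mapsto (w',-c')/\|(w',-c')\|$ is open in $\mathcal S^n$, because normalization is an open map (not a local homeomorphism). That image therefore meets $D$, and positive rescaling does not change the half-space.
\item The direction $t=e$ is not ``trivial'' from the data on $D$, since $b_\lambda(e)=\lambda(H)$. You never need it, though, because Step 2 recovers $\tilde\lambda(\mathbb R^n)$ as a limit of half-spaces with $w\neq 0$.
\end{itemize}
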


\begin{proof}
Part (i). Since $\mu_i'$ is a finite signed measure on $H$, for every $A\subseteq H$, 
\[|\mu'(A)|= \left\vert \sum_{i=1}^k \mu(P^{-1}_i (A))\right\vert \leq \sum_{i=1}^k |\mu(P^{-1}_i (A))|<\infty;\]
that is, $\mu'$ is also a finite signed measure on $H$. Then, by Theorem 1.3 in \cite{lyons2020}, both the maps $\mu_{(i)}' \mapsto b_{\mu_{(i)}'}$ and $\mu' \mapsto b_{\mu'}$ are injective.



Part (ii). Let $\lambda$ be the difference of two finite
signed measures on $H$ whose hemisphere data agree on $D$.
Write $H=\{x \in \mathcal{S}^n: a\cdot x>0\}$ for some $a \in \mathcal{S}^n$. 

First, if $t\in \mathcal{S}^n$ satisfies $|\lambda|(H\cap\partial H_t)=0$, choose a sequence $t_{\ell}\in D$ that converges to $t$. By dominated convergence, $\lambda(H\cap H_{t_\ell})\to \lambda(H\cap H_t)$. Since $\lambda(H\cap H_{t_\ell})=0$ for all $\ell$, we obtain $\lambda(H\cap H_t)=0$.

Now, let $t\in \mathcal{S}^n$ be arbitrary. Choose $\varepsilon_j\downarrow0$ so that
the level sets $$\{x\in H: (t\cdot x)/(a\cdot x)=\varepsilon_j\}$$
have zero $|\lambda|$-measure; this is possible because only countably many levels can have positive $|\lambda|$-measure. The normals proportional to $t-\varepsilon_j a$
are continuity normals, and their hemispheres on $H$ increase to
$H\cap H_t$. Continuity from below gives $\lambda(H\cap H_t)=0$.

Thus $\lambda(H\cap H_t)=0$ for every $t\in \mathcal{S}^n$. Part (i) now implies $\lambda=0$. The argument applies to each layer measure and also to their sum.
\end{proof}

   



\begin{proof}[\normalfont\bfseries Proof of Theorem~\ref{Thm.Gd1}]
Let $\nu=\mu_1-\mu_2$ for two probability measures $\mu_1,\mu_2 \in\mathcal P_1(G,d)$, and suppose
\begin{equation}\label{eq.STNeq}
E_d(\nu)=\iint_{G\times G}d(x,y)\dd\nu(x)\dd\nu(y)=0.
\end{equation}
Extend $\nu$ by zero to $\mathcal X$. Equations \eqref{eq.I} and
\eqref{eq.equality-data}, together with $\sigma_{\mathcal S^n}\ll m$, imply
$\nu(P^{-1}(H_t))=0$ for spherical volume-almost every $t$.
Indeed, every nonempty open subset of $\mathcal S^n$ has positive
spherical volume and therefore intersects this full-volume set. Moreover,
the pushforward probabilities $\mu'_\ell=P_\#\mu_\ell$, $\ell=1,2$, satisfy
\[
\mu'_1(H\cap H_t)-\mu'_2(H\cap H_t)
=\nu(P^{-1}(H_t))=0
\]
for every $t$ in that dense set, because both are concentrated on $H$.
The measure $\lambda=P_\#\nu$ is
concentrated on $H$, and $b_\lambda(t)=0$ there. By
Lemma~\ref{lem.injective}, $P_\#\nu=0$. The injective Borel map
$P|_G$ has a Borel inverse onto its image, so $\nu=0$.
For clarity, the last implication can be checked on Borel sets.
If $A\subseteq G$ is Borel, the Borel inverse of $P|_G$ ensures that
$P(A)$ is Borel, and injectivity gives
\[
\mu_\ell(A)=\mu'_\ell(P(A)),\qquad \ell=1,2.
\]
Hence equality of the pushforward measures yields equality on every
Borel subset of $G$, which is precisely $\mu_1=\mu_2$. Conversely,
$\nu=0$ gives \eqref{eq.STNeq}. This proves SNT.
\end{proof}

Let $R \colon \mathcal{S}^n \to \mathcal{S}^n, x \mapsto -x$ denote the antipodal reflection. For a signed measure $\gamma$ on $\mathcal{S}^n$, define its \emph{antisymmetrised} measure $\overline{\gamma}$ by $\overline{\gamma} \coloneqq \gamma - R_\#\gamma$, where $R_\#\gamma$ is the pushforward of $\gamma$ by $R$, i.e., $(R_\#\gamma)(A) = \gamma(R^{-1}(A))$ for measurable $A \subseteq \mathcal{S}^n$. A \emph{partitioning hemisphere} $K$ is a Borel subset of $\mathcal{S}^n$ such that $K$ and $R(K)$ partition $\mathcal{S}^n$ and the interior of $K$ is a hemisphere.

We need the following result to prove Theorem~\ref{Thm.Gd2}.

\begin{lemma}\label{lem:antisymm}
Let $\mu_1, \mu_2$ be two probability measures on $\mathcal{X}$, and $\mu_\ell' = \mu_\ell \circ P^{-1}$, $\ell = 1, 2$. Then the following hold:
\begin{enumerate}
\item[(i)] If $\mu_1'(K) = \mu_2'(K)$ for every partitioning hemisphere $K \subset \mathcal{S}^n$, then the antisymmetrised measures satisfy $\overline{\mu_1'} = \overline{\mu_2'}$.
\item[(ii)]  If $\mu_1'(H_t)=\mu_2'(H_t)$ for every $t$ in a dense subset $D$ of $\mathcal{S}^n$, then $\overline{\mu_1'}=\overline{\mu_2'}$.
\end{enumerate}
\end{lemma}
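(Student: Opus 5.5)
Part (i) reduces to Lyons' antisymmetrisation result on the sphere once the question is transferred from $\mathcal X$ to $\mathcal S^n$ via the pushforward. Put $\gamma=\mu_1'-\mu_2'$, a finite signed measure on $\mathcal S^n$ with $\gamma(\mathcal S^n)=0$. The hypothesis says $\gamma(K)=0$ for every partitioning hemisphere $K$. This is exactly the setting of Lyons \cite{lyons2020}, where the map sending a measure to its values on partitioning hemispheres is injective modulo antisymmetric parts. For a self-contained version I would argue directly. For a partitioning hemisphere $K$ we have $R(K)=\mathcal S^n\setminus K$. Hence
\[
\overline{\gamma}(K)=\gamma(K)-\gamma(R(K))=\gamma(K)-\bigl(\gamma(\mathcal S^n)-\gamma(K)\bigr)=2\gamma(K)=0 .
\]
The measure $\overline\gamma$ is odd under $R$, that is, $R_\#\overline\gamma=-\overline\gamma$. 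For any $t$, $H_t$ is the interior of some partitioning hemisphere $K_t$, and $K_t\setminus H_t\subseteq\partial H_t$. So the vanishing on all such $K$, taken over all choices of boundary split, controls $\overline\gamma$ on open hemispheres up to great-sphere pieces. To handle those pieces I would use the same limiting device as in Lemma~\ref{lem.injective}(ii): pick $t$ with $|\gamma|(\partial H_t)=0$, in which case $\overline\gamma(H_t)=\overline\gamma(K_t)=0$ directly. Then approximate general $t$ by continuity normals, as in that lemma, to get $\overline\gamma(H_t)=0$ for all $t$. Finally I would conclude $\overline\gamma=0$ from the hemisphere transform injectivity on odd measures. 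On odd measures the hemisphere transform is the classical spherical Radon/cosine-type transform, and it is injective there; this is Theorem 1.3 / the antisymmetric statement in \cite{lyons2020}. Since $\overline{\gamma}=\overline{\mu_1'}-\overline{\mu_2'}$, this gives $\overline{\mu_1'}=\overline{\mu_2'}$.

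For (ii), the plan is to upgrade the dense-set equality to all open hemispheres and then feed it into (i). First, from $\gamma(H_t)=0$ on $D$, use the argument of Lemma~\ref{lem.injective}(ii) to get $\gamma(H_t)=0$ for every $t$. At continuity directions $t$, those with $|\gamma|(\partial H_t)=0$, this follows by dominated convergence along $t_\ell\in D$, $t_\ell\to t$. For arbitrary $t$ one perturbs the normal within the closed great sphere and uses continuity from below. Here the reference vector $a$ of that lemma is replaced by a direction chosen so that countably many bad levels are avoided. Next, given a partitioning hemisphere $K$ with interior $H_t$, write $K=H_t\cup B$ with $B\subseteq\partial H_t$ and $B\cup R(B)=\partial H_t$. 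Then
\[
\overline\gamma(K)=\overline\gamma(H_t)+\overline\gamma(B)=\bigl(\gamma(H_t)-\gamma(H_{-t})\bigr)+\overline\gamma(B)=\overline\gamma(B).
\]
Thus it remains to show $\overline\gamma$ vanishes on half-great-spheres $B$. I would do this by induction on dimension: restricted to the great sphere $\partial H_t\cong\mathcal S^{n-1}$, the measure $\gamma$ again has vanishing values on open hemispheres of that lower sphere. These values are obtained as limits of $H_{t+\varepsilon s}\setminus H_t$-type differences, which are computed from the full-sphere data by continuity from above and below. The base case $\mathcal S^0$ is trivial. Applying (i) then finishes.

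I expect the main obstacle to be the boundary bookkeeping: showing that vanishing on open hemispheres propagates to the atoms and lower-dimensional mass on great spheres. The limit identities expressing lower-sphere hemisphere values via full-sphere ones need care. If this becomes messy, I would instead cite directly the result in \cite{lyons2020} that hemisphere values determine $\overline\gamma$, and only supply the density-to-everywhere step.
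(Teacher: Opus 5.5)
\textbf{Part (ii) has a genuine gap.} Its first step, upgrading $\gamma(H_t)=0$ on the dense set $D$ to $\gamma(H_t)=0$ for \emph{every} $t$, is false. Everything after it depends on that step: the identity $\overline\gamma(K)=\overline\gamma(B)$, the induction, and your fallback.

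Take the configuration of Remark~\ref{rmk.ThmGd2}: $\mu_1'=\tfrac12(\delta_x+\delta_{-x})$ and $\mu_2'=\tfrac12(\delta_y+\delta_{-y})$ with $y\neq\pm x$. Then $\gamma(H_t)=0$ for all $t$ orthogonal to neither $x$ nor $y$, which is a dense set. But $\gamma(H_t)=-\tfrac12$ whenever $t\perp x$ and $t\not\perp y$.

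The device of Lemma~\ref{lem.injective}(ii) does not transfer. It relies on the measure living on $H=\{a\cdot x>0\}$, so that $H\cap H_{t-\varepsilon a}$ increases to $H\cap H_t$. On the whole sphere no open hemisphere other than $H_t$ is contained in $H_t$. Instead $H_{t-\varepsilon a}$ converges pointwise to $H_t\cup(\partial H_t\cap H_{-a})$, so these limits pick up mass on $\partial H_t$.

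The same example refutes your induction hypothesis. For $n=1$ and $t\perp x$, the restriction of $\gamma$ to $\partial H_t=\{x,-x\}$ is $\tfrac12(\delta_x+\delta_{-x})$, which equals $\tfrac12$ on each open hemisphere of $\mathcal S^0$. The missing point is that the even part of $\gamma$ is invisible to dense hemisphere data and can sit on exceptional great spheres; only the odd part $\overline\gamma$ is controlled. The paper sidesteps this by citing Lemma~2.1 of \cite{lyons2020}, which is already stated for a dense set of normals.

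\textbf{A repair covering both parts.} Take any $t$ with $|\gamma|(\partial H_t)=0$; by the Fubini argument in Step~1 of the paper's proof, this holds for $\sigma_{\mathcal S^n}$-almost every $t$. Dominated convergence along $D\ni t_\ell\to t$ gives $\gamma(H_t)=0$. Since $-t$ is also such a normal, $\overline\gamma(H_t)=\gamma(H_t)-\gamma(H_{-t})=0$.

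For an odd measure $\omega$ one has $2\omega(H_t)=\int\operatorname{sgn}(t\cdot x)\dd\omega(x)$. By Funk--Hecke, integrating $t\mapsto\omega(H_t)$ against a degree-$k$ spherical harmonic $Y$ yields $\tfrac12\lambda_k\int Y\dd\omega$, with $\lambda_k\neq0$ for odd $k$. Even-degree moments of an odd measure vanish. Hence almost-everywhere vanishing of $\overline\gamma(H_t)$ already forces $\overline\gamma=0$, with no boundary bookkeeping.

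\textbf{Part (i).} Your identity $\overline\gamma(K)=2\gamma(K)$ is correct. Working with the odd measure on the whole sphere is a genuinely different route from the paper's. The paper chooses a $\mu'$-null great sphere, passes to the open hemisphere $H$ it bounds, builds $K_t$ with $H\cap K_t=H\cap H_t$, and applies injectivity for measures on an open hemisphere. The paper invokes Theorem~1.3 of \cite{lyons2020} only for that open-hemisphere statement, so your odd-measure injectivity should rest on the argument above rather than on that citation.

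Your extension from continuity normals to all $t$ ``by continuity from below'' fails for the reason given above, but it is unnecessary. If $H_t\cup B$ is a partitioning hemisphere, then so is $H_t\cup R(B)$. Oddness then gives $0=\overline\gamma(H_t)+\overline\gamma(B)=\overline\gamma(H_t)-\overline\gamma(B)$, so $\overline\gamma(H_t)=0$ for every $t$.
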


\begin{proof}
Proof of part (i) consists of three steps.

\textit{Step 1. Construct an $(n-1)$-dimensional great sphere $\mathcal{A}^{n-1} \subset \mathcal{S}^n$ such that $\mu_1'(\mathcal{A}^{n-1}) = \mu_2'(\mathcal{A}^{n-1}) = 0$.} 

For each fixed $x\in\mathcal S^n$, the set of normals $u$ with
$u\cdot x=0$ has spherical measure zero. By Fubini's theorem,
\[
\int_{\mathcal S^n}(\mu'_1+\mu'_2)(\partial H_u)\,d\sigma_{\mathcal S^n}(u)=0;
\]
hence, $(\mu'_1+\mu'_2)(\partial H_u) = 0$ for $\sigma_{\mathcal S^n}$-almost every $u$. Choose such a normal $u$ and put $\mathcal A^{n-1}=\partial H_u$.
Since $\mu_\ell'\le\mu_1'+\mu_2'$, we have $\mu_\ell'(\mathcal A^{n-1})=0$ for $\ell=1,2$.

\textit{Step 2. $\overline{\mu_1'}(H\cap K) =\overline{\mu_2'}(H\cap K)$ for any partitioning hemisphere $K$ of $\mathcal{S}^n$.}

Let $H \subset \mathcal{S}^n$ be one of the two open hemispheres determined by $\mathcal{A}^{n-1}$ (so that $\mathcal{S}^n = H \cup \mathcal{A}^{n-1} \cup R(H)$). Choose a Borel set $\tilde{\mathcal{A}}^{n-1} \subset \mathcal{A}^{n-1}$ such that
\[\tilde{\mathcal{A}}^{n-1} \cup R(\tilde{\mathcal{A}}^{n-1}) = \mathcal{A}^{n-1} \quad \text{and} \quad \tilde{\mathcal{A}}^{n-1} \cap R(\tilde{\mathcal{A}}^{n-1}) = \emptyset;\]
for instance, choose $\tilde{\mathcal{A}}^{n-1}$ that contains exactly one point from each antipodal pair of ${\mathcal{A}}^{n-1}$. Then $H \cup \tilde{\mathcal{A}}^{n-1}$ is a partitioning hemisphere and $\mu_1'(\tilde{\mathcal{A}}^{n-1})=\mu_2'(\tilde{\mathcal{A}}^{n-1})=0$. Thus we have
$\mu_1'(H) = \mu_2'(H)$.

Now for any partitioning hemisphere $K$ of $\mathcal{S}^n$, note that
\begin{align}
\overline{\mu_\ell'}(H\cap K) &= {\mu_\ell'}(H\cap K) - {\mu_\ell'}(R(H)\cap R(K)) \nonumber\\
&={\mu_\ell'}(H\cap K) + {\mu_\ell'}(H\cap R(K)) -{\mu_\ell'}(H\cap R(K)) - {\mu_\ell'}(R(H)\cap R(K)) \nonumber\\
&={\mu_\ell'}(H) - {\mu_\ell'}(R(K)) = {\mu_\ell'}(H) + {\mu_\ell'}(K) - 1. \label{eq.mubar}
\end{align}
Then using $\mu_1'(H) = \mu_2'(H)$ and $\mu_1'(K) = \mu_2'(K)$, we have $\overline{\mu_1'}(H\cap K) =\overline{\mu_2'}(H\cap K)$ for every $K$. 

\textit{Step 3. Construct a partitioning hemisphere $K_t$ of $\mathcal{S}^n$ such that $H\cap H_t = H\cap K_t$ for every $t \in \mathcal{S}^n$.}

For any $t \in \mathcal{S}^n$, we can construct a partitioning hemisphere $K_t$ of $\mathcal{S}^n$ such that $H\cap H_t = H\cap K_t$. To see this, let $\Omega \coloneqq \partial H \cap \partial H_t$. Since 
\[\partial H = \{x \in \mathcal{S}^n \, \vert \, x \cdot u = 0\}\]
for some $u \in \mathcal{S}^n$, we have
\[\Omega = \{x \in \mathcal{S}^n \, \vert \, x \cdot u = 0, x \cdot t = 0\}.\]
Then for any $x \in \Omega$, we have $-x \in \Omega$; hence $\Omega$ consists of antipodal pairs. Choosing a Borel set $F \subset \Omega$ that contains exactly one point from each antipodal pair of $\Omega$, we have
\begin{equation}\label{eq.FOmega}
F \cup R(F) = \Omega \quad \text{and} \quad F \cap R(F) = \emptyset.
\end{equation}
Now let
\[K_t = H_t \cup (\partial H_t \cap R(H)) \cup F.\]
Noting that
\[R(K_t) = H_{-t} \cup (\partial H_t \cap H) \cup R(F),\]
and in view of \eqref{eq.FOmega}, it is easy to verify that $K_t$ is a partitioning hemisphere of $\mathcal{S}^n$. Also, $K_t$ satisfies $H\cap K_t = H\cap H_t$.

Therefore, result from step 2 entails $\overline{\mu_1'}(H\cap H_t) =\overline{\mu_2'}(H\cap H_t)$ for every $t \in \mathcal{S}^n$. Then it follows from Lemma~\ref{lem.injective}(i) that $\overline{\mu_1'} = \overline{\mu_2'}$.


Turning to part (ii), Lemma 2.1 of \cite{lyons2020} states that the anti-symmetrization of a probability measure on $\mathcal S^n$ is determined by its hemisphere values on any dense set of normals. Applying this to $\mu_1'$ and $\mu_2'$ yields $\overline{\mu_1'}=\overline{\mu_2'}$.

\end{proof}

\begin{proof}[\normalfont\bfseries Proof of Theorem~\ref{Thm.Gd2}] Part (i).
Let $\mu_1,\mu_2\in\mathcal{P}_1(G)$ be two probability measures concentrated on $G$, and suppose that $E_d(\nu)=0$ for $\nu=\mu_1-\mu_2$.
By \eqref{eq.equality-data} and Lemma~\ref{lem.volume-positive}, the hemisphere
masses of $\mu'_\ell=P_\#\mu_\ell$ agree on a dense set of normals. Then Lemma~\ref{lem:antisymm} gives $\overline{\mu'_1}=\overline{\mu'_2}.$

For a positive measure $\gamma$, its maximal invariant submeasure is
$\gamma_R=\gamma\wedge R_\#\gamma$, i.e., the greatest lower bound of
the two measures. It is invariant under $R$. Relative to the
dominating measure $\gamma+R_\#\gamma$, its density is the minimum
of the two densities. Thus $\gamma-\gamma_R$ and its reflected
measure are mutually singular, and the Jordan decomposition gives
\[
\gamma-\gamma_R=(\gamma-R_\#\gamma)^+=\overline\gamma^{\,+}.
\]
Here is the density calculation underlying this decomposition.
Let $\rho=\gamma+R_\#\gamma$, and write
$f=d\gamma/d\rho$ and $g=d(R_\#\gamma)/d\rho$. Then
\[
\frac{d\gamma_R}{d\rho}=\min(f,g),\qquad
\frac{d(\gamma-\gamma_R)}{d\rho}=(f-g)_+.
\]
The reflected remainder has density $(g-f)_+$, so the two remainders
are supported on disjoint sets. Moreover, every invariant measure
$\tau\leq\gamma$ also satisfies $\tau=R_\#\tau\leq R_\#\gamma$;
its density is bounded by $\min(f,g)$. This proves the maximality
of $\gamma_R$, as well as the asserted Jordan decomposition.
It follows that
$\mu'_1-\mu'_{1,R}=\mu'_2-\mu'_{2,R}$.
Both invariant measures are concentrated on
$P(G)\cap R(P(G))$. If there is no antipodal pair, both vanish.
If the unique pair is $\{z,-z\}$, each has the form
$c_\ell(\delta_z+\delta_{-z})$. The equality of the non-invariant
parts and the total probability masses implies $2c_1=2c_2$.
In detail, let $q=\mu'_1-\mu'_{1,R}=\mu'_2-\mu'_{2,R}$.
Taking total masses gives
\[
1=q(\mathcal S^n)+\mu'_{1,R}(\mathcal S^n)
 =q(\mathcal S^n)+\mu'_{2,R}(\mathcal S^n).
\]
An invariant measure supported on $\{z,-z\}$ assigns equal mass
to these two points, so equality of total masses determines that
measure completely. Consequently $\mu'_{1,R}=\mu'_{2,R}$.
Hence $\mu'_1=\mu'_2$, and radial injectivity on $G$ gives
$\mu_1=\mu_2$. The converse is immediate.

Part (ii). Suppose $(G, d)$ is of SNT, and assume that either (INJ) or (ODA) fails. Then Remark~\ref{rmk.ThmGd2} implies that when $J \equiv 0$, $(G, d)$ is not of SNT, which is a contradiction. Hence,  (INJ) and (ODA) are also necessary.
\end{proof}

\begin{proof}[\normalfont\bfseries Proof of Theorem~\ref{thm.typeIandII}] First, we prove the case that $\mathcal X$ is almost $k_p$-star-shaped. Since $\mathcal X_0\setminus\mathcal X$ has zero induced
$n$-dimensional measure in $\mathcal X_0$, the pushforward measures satisfy $P_\#(\sigma_{\mathcal X_0}|_\mathcal X)=P_\#\sigma_{\mathcal X_0}$.
Reindexing remaining points on a ray does not change this aggregate measure. As the same penalty is restricted from $\mathcal X_0$, one has
$d_\mathcal X(x,y)=d_{\mathcal X_0}(x,y)$ for $x,y\in\mathcal X$. NT follows
by restriction. The hypotheses on $G$ also hold in $\mathcal X_0$, so
Theorem~\ref{Thm.Gd2} gives SNT of $(G,d_{\mathcal X_0})$. The same conclusion holds for $(G,d_{\mathcal X})$, since $d_\mathcal X =d_{\mathcal X_0}$ on $G \times G$.

Next, consider the case that $\mathcal X$ is essentially $k_p$-star-shaped.
Note that the area formula \eqref{eq.area-formula} applies to
each layer. For the null set $N \subseteq \mathcal{S}^n$ in Definition~\ref{def.essential}, since $\mathcal S^n\setminus N$ has full measure, Lemma~\ref{lem.volume-positive} implies $\sigma_{\mathcal S^n}\ll m$.
For completeness, write $\sigma_i=\sigma_{\mathcal X}|_{L_p(i)}$
and $P_i=P|_{L_p(i)}$. Since each $P_i$ is injective, the area formula gives
\[
\sigma_{\mathcal S^n}(A)=\int_{P_i^{-1}(A)}\operatorname{Jac}_nP(x)\,{\rm d}\sigma_i(x),
\qquad A\subseteq P(L_p(i)).
\]
In particular, $\sigma_{\mathcal S^n}(A) > 0$ implies
$\sigma_i(P_i^{-1}(A))>0$. The absolute-continuity step can also be read directly from the
layers. If $m(B)=0$ for a Borel set $B\subseteq\mathcal S^n$, then
$\sigma_i(P_i^{-1}(B))=0$ for every $i$. The area formula implies
$\sigma_{\mathcal S^n}(B\cap P(L_p(i)))=0$ for every layer.
Their union contains $B\setminus N$, where $N$ is null, so
$\sigma_{\mathcal S^n}(B)=0$. This is exactly
$\sigma_{\mathcal S^n}\ll m$, which is needed below to pass from
zero energy to spherical almost-everywhere hemisphere data.
The full-rank hypothesis also gives the reverse absolute continuity,
although only the first direction is needed in the equality argument.

Now, we prove that $d$ is a metric of NT. For $x \ne y$ and $P(x)\ne P(y)$, put
\[
A_{x,y}=\{t\in\mathcal S^n:
|1_{H_t}(P(x))-1_{H_t}(P(y))|^2=1\}.
\]
This set has positive spherical volume, and removing the exceptional
null set $N$ does not change that fact. Since
$A_{x,y}\setminus N\subseteq\bigcup_{i=1}^kP(L_p(i))$, some layer
has a positive-volume intersection with $A_{x,y}$, and hence
$m(A_{x,y})>0$. Therefore $d(x,y)\geq m(A_{x,y})>0$. For $x \ne y$ but $P(x) = P(y)$, the separation condition \eqref{eq.fibre-separation} also entails $d(x,y) > 0$. Then the triangle inequality together with \eqref{eq.I} prove that $d$ is a metric
of NT. 

Finally, we prove that $(G, d)$ is of SNT. If the difference of two probability measures on $G$
has zero energy, \eqref{eq.equality-data} gives zero total
hemisphere masses for spherical volume-almost every normal.
More explicitly, recall that for $\nu=\mu_1-\mu_2$, the energy satisfies
\[
E_d(\nu)=E_J(\nu)-2\sum_{i=1}^k\int_{\mathcal S^n}
\bigl[\nu(P^{-1}(H_t))\bigr]^2\,dm_i(t)\leq0.
\]
If equality holds, all nonpositive terms vanish. Absolute continuity
$\sigma_{\mathcal S^n}\ll m$ then entails
$\mu'_1(H_t)=\mu'_2(H_t)$ on a full-volume, hence dense, set of normals,
where $\mu'_\ell=P_\#\mu_\ell$, $\ell = 1, 2$.
Lemma~\ref{lem:antisymm} yields
$\overline{\mu'_1}=\overline{\mu'_2}$. The hypothesis of at most one
antipodal pair removes the remaining invariant part exactly as in
Theorem~\ref{Thm.Gd2}. Injectivity of $P|_G$ then gives
$\mu_1=\mu_2$. This proves the SNT of $(G, d)$.
\end{proof}

\begin{proof}[\normalfont\bfseries Proof of Theorem~\ref{thm:radial_completion}]
Neither the binary triangle inequality nor the calculation
\eqref{eq.I} uses full radial coverage. Thus $d$ is a finite
negative-type pseudometric, and \eqref{eq.general-separation}
is precisely the remaining point-separation requirement.
If $\nu=\mu_1-\mu_2$ on $G$ has zero energy $E_d(\nu)$, the two nonpositive
terms in \eqref{eq.I} vanish separately, giving
$(P_\#\nu)(H_t)=0$ for $m$-almost every $t\in P(\mathcal X)$.
By \eqref{eq.restricted-injectivity}, $P_\#\nu=0$. Injectivity of
$P|_G$ and its Borel inverse then imply $\nu=0$.

We also retain the following spherical limiting argument as an alternative proof of the NT. Set
\[
m_\varepsilon=m+\varepsilon^n\sigma_{\mathcal S^n},\qquad
d_\varepsilon(x,y)=J(x,y)+\int_{\mathcal S^n}
|1_{H_t}(P(x))-1_{H_t}(P(y))|^2\,dm_\varepsilon(t).
\]
If $0<\varepsilon<\operatorname{dist}(p,\mathcal X)$, this is the
contribution of the disjoint auxiliary sphere
$S_\varepsilon(p)=\{p+\varepsilon t:t\in\mathcal S^n\}$ to
$\mathcal X\cup S_\varepsilon(p)$, restricted to $\mathcal X$ with
the same penalty. Radial indices shift by one, but the aggregate
original directional measure is unchanged. When no disjoint sphere
is available, the definition of the measure $m_\varepsilon$ still applies. Since the binary
integrand is bounded by one, as $\varepsilon \rightarrow 0$,
\begin{equation}\label{eq.uniform-bound}
\begin{split}
\|d_\varepsilon-d\|_\infty
&:=\sup_{x,y\in\mathcal X}|d_\varepsilon(x,y)-d(x,y)|\\
&\leq\varepsilon^n\sigma_{\mathcal S^n}(\mathcal S^n)\longrightarrow0.
\end{split}
\end{equation}
Indeed, the additional directional measure contributes exactly
\[
d_\varepsilon(x,y)-d(x,y)
=\varepsilon^n\int_{\mathcal S^n}
|1_{H_t}(P(x))-1_{H_t}(P(y))|^2\,d\sigma_{\mathcal S^n}(t).
\]
The integrand lies between zero and one, independently of $x,y$.
This explains both the nonnegativity of the difference and the
uniform, rather than merely pointwise, convergence in
\eqref{eq.uniform-bound}.
Every $d_\varepsilon$ has NT by \eqref{eq.I} with
$m_\varepsilon$ in place of $m$. For a finite signed measure $\nu$
of total mass zero and integrable energy,
\[
|E_{d_\varepsilon}(\nu)-E_d(\nu)|
\leq\|d_\varepsilon-d\|_\infty\,|\nu|(\mathcal X)^2\longrightarrow0.
\]
In expanded form, the estimate follows from
\begin{align*}
\left|\iint(d_\varepsilon-d)(x,y)\,{\rm d}\nu(x) {\rm d}\nu(y)\right|
&\leq\iint|d_\varepsilon-d|(x,y)\,d|\nu|(x)d|\nu|(y)\\
&\leq\varepsilon^n\sigma_{\mathcal S^n}(\mathcal S^n)
|\nu|(\mathcal X)^2.
\end{align*}
Thus the limit may be passed through the energy integral, giving
\[
\iint d(x,y)\,{\rm d}\nu(x){\rm d}\nu(y)
=\lim_{\varepsilon\downarrow0}\iint d_\varepsilon(x,y)\,{\rm d}\nu(x){\rm d}\nu(y)
\leq0.
\]
Consequently $E_d(\nu)\leq0$. This supplies a second proof of
NT. The equality case, however, is justified by
\eqref{eq.I} and the restricted injectivity hypothesis, not by
preservation of strict inequalities under the limit.
\end{proof}





\begin{proof}[\normalfont\bfseries Proof of Proposition~\ref{prop:torus-positive-patch}]
We prove the result for $G_+$, and the same arguments apply also to $G_-$. The set $U$ is an open neighbourhood of $(1,0,\ldots,0)$, so $G_+$ is a smooth $n$-dimensional patch with $\mathcal H^n(G_+)>0$. The radial projection is a diffeomorphism on this patch. We prove the restricted injectivity needed by Theorem~\ref{thm:radial_completion}.

Let $\lambda$ be a finite signed measure concentrated on $U$ with
$\lambda(H_t)=0$ for $g_+\dd\sigma_{\mathcal S^n}$-almost every $t\in B_{R, r}$.
Since $g_+>0$ on the open belt, this holds for $\sigma_{\mathcal{S}^n}$-almost every such normal. Use the $n$-dimensional gnomonic chart
\[
\Gamma(z)=(z_2/z_1,\ldots,z_{n+1}/z_1),\qquad
\widetilde\lambda=\Gamma_\#\lambda.
\]
Its support is contained in the closed ball of radius
$\sqrt{1-\eta^2}/\eta$ in $\mathbb R^n$. Put
\[
t(s,\omega)=\frac{(-s,\omega)}{\sqrt{1+s^2}}, \qquad s \in \mathbb{R}, \, \omega \in \Omega,
\]
where $\Omega=\{\omega\in\mathcal S^{n-1}:|\omega_n|<a-\delta\}$.
This is a nonempty open family for $n\geq2$, and every real offset
$s$ gives a normal in $B_{R,r}^\circ$. The corresponding hemisphere
mass equals $F_\omega(s)=\widetilde\lambda\{u:\omega\cdot u>s\}$.
The coordinate volume element is
\[
\dd\sigma_{\mathcal S^n}(t(s,\omega))
=(1+s^2)^{-(n+1)/2}\dd s\dd\sigma_{\mathcal S^{n-1}}(\omega).
\]
It is positive, so Fubini gives $F_\omega=0$ almost everywhere in
$s$ for almost every $\omega\in\Omega$. In distributions,
\[
-\partial_sF_\omega=(u\mapsto\omega\cdot u)_\#\widetilde\lambda=0.
\]
Consequently $\widehat{\widetilde\lambda}(\xi\omega)=0$ for every
$\xi\in\mathbb R$ and almost every $\omega\in\Omega$. Continuity extends
this to all $\omega\in\Omega$. Compact support makes the Fourier
transform real analytic on $\mathbb R^n$. Its vanishing on the nonempty
open cone $\{\xi\omega:\xi>0,\omega\in\Omega\}$ implies that it
vanishes identically. Fourier uniqueness gives
$\widetilde\lambda=0$ and then $\lambda=0$. This proves
\eqref{eq.restricted-injectivity}, and hence $(G_+,d_Q)$ has strong negative type for every $n\geq2$. 
\end{proof}

\begin{proof}[\normalfont\bfseries Proof of Proposition~\ref{prop:product-torus}]

Since $\varepsilon>0$, $J$ in \eqref{eq.Jtorus} separates distinct points and satisfies the triangle inequality. For every nonzero difference $\nu$ of two probability measures, additivity of energy gives
\[
E_{d_{T,\varepsilon}}(\nu)
=E_{D_T}(\nu)+\varepsilon E_{\|X(\cdot)-X(\cdot)\|}(\nu)<0,
\]
where the first term is nonpositive due to \eqref{eq.ht}, and the second term is strictly negative because Euclidean space is of SNT \cite[Theorem~3.16]{lyons2013} and $X$ is injective. Also, all measures have finite first moments since the torus and metric are bounded. 

Moreover, the metric $d_{T,\varepsilon}$ has the ordinary topology: its lower bound is $\varepsilon$ times chord distance, and continuity of $D_T$ follows by dominated convergence. To see the latter, fix all but one angle
$\phi_j$ in $K$; since $a_j>0$, its zero set in that angle is finite, and Fubini makes the full zero set volume-null.
\end{proof}



\providecommand{\bysame}{\leavevmode\hbox to3em{\hrulefill}\thinspace}
\providecommand{\MR}{\relax\ifhmode\unskip\space\fi MR }
\providecommand{\MRhref}[2]{%
  \href{http://www.ams.org/mathscinet-getitem?mr=#1}{#2}
}
\providecommand{\href}[2]{#2}






    



\end{document}